\documentclass[12pt]{article}

\usepackage{amsmath,amssymb,amsfonts}
\usepackage{amsthm}
\usepackage[margin=1in]{geometry}

\usepackage{graphicx}
\usepackage{booktabs}

\usepackage{hyperref}
\usepackage{microtype}
\newtheorem{theorem}{Theorem}
\newtheorem{corollary}{Corollary}
\newtheorem{proposition}{Proposition}

\newtheorem{remark}{Remark}
\newtheorem{definition}{Definition}

\newcommand{\fplus}{f^{+}}

\newcommand{\fpm}{f^{\pm}}

\begin{document}

\title{State-Feedback Control of Logistic-Based Gene Regulatory
       Networks: Closed-Form Lyapunov Certificates, Monostabilization,
       and Delay-Uniform Stability}

\author{Ismail Belgacem%
  \thanks{Independent researcher, Mezaourou Ghazaouet, Tlemcen 13421, Algeria.\newline
           Email: \texttt{ismail.belgacem.81@gmail.com}}}

\date{June 2026}

\maketitle

\begin{abstract}
Gene regulatory networks (GRNs) are high-value targets for therapeutic and
synthetic-biology intervention, yet the classical Hill-function models used to
describe them carry a structural defect that is critical for control: the
production term vanishes when the activator is absent, causing loss of
controllability under multiplicative actuation and network decoupling under
additive actuation---precisely the low-expression regime in which cells most
often operate. Building on companion works that establish logistic functions as
robust Hill alternatives~\cite{belgacem2025exploring,belgacem2026logistic}, we develop
an \emph{additive state-feedback} framework for logistic-based GRNs in which a
feedforward-plus-proportional law renders any positive setpoint a closed-loop
equilibrium, whether or not it is an equilibrium of the uncontrolled dynamics.
We prove local exponential stability under an explicit Gershgorin gain bound and
\emph{global} exponential stability of the nonlinear closed loop---via a common
quadratic Lyapunov function built on the logistic sector bound---under the gain
condition $(\gamma_1+K_1)(\gamma_2+K_2)>\kappa_1\kappa_2\lambda^2/64$, with a
closed-form rate. A diagonal Lyapunov certificate $P=\operatorname{diag}(B,A)$
for the sign-structured closed-loop Jacobian supplies explicit settling-time and
input-to-state-stability bounds, the rate $\min_i(\gamma_i+K_i)$ being
arbitrarily accelerable by gain. Two complementary scalar results follow from the
same universal Lipschitz bound $\lambda/4$ of the logistic derivative: a
\emph{parameter-uniform monostabilization budget} $K^{*}=\kappa\lambda/4-\gamma$
(Theorem~\ref{thm:monostab}) for bistable self-activation switches, and a
\emph{Halanay-type delay-uniform global exponential stability} theorem
(Theorem~\ref{thm:halanay}) under $\gamma+K>\kappa\lambda/4$, with closed-form
two-sided bounds on the delay-dependent rate. A worked T-cell switch example and
side-by-side comparisons with the Hill counterpart---whose off-diagonal Jacobian
coupling decays as $\Theta(x_{d,1}^{n-1})\to 0$ as the activator setpoint
$x_{d,1}\to 0$, whereas the logistic coupling converges to the strictly positive
limit $\kappa\lambda f^+(0)(1-f^+(0))$---illustrate the results.
\end{abstract}

\medskip
\noindent\textbf{Keywords:} Gene regulatory networks; logistic functions; state-feedback control; feedforward-plus-proportional control; Lyapunov-based certificates; monostabilization; Halanay inequality; delay-uniform stability; synthetic biology.

\section{Introduction}
\label{sec:intro}

The control of gene regulatory networks (GRNs) is a central challenge in systems
and synthetic biology, with applications spanning metabolic engineering, drug
discovery, and gene therapy~\cite{alon2006introduction,dejong2002modeling}. Precise
regulation of gene expression requires both faithful mathematical models of the
underlying molecular interactions and rigorous control-theoretic frameworks capable
of driving network states to desired targets despite biological noise and parameter
uncertainty.

Hill functions, originally derived to model cooperative ligand binding to
hemoglobin~\cite{hill1910possible,weiss1997hill}, have become the standard sigmoid
building blocks for GRN models~\cite{dejong2002modeling,alon2006introduction}. Their
appeal lies in a compact parametric form: $h^+(x,\theta,n) = x^n/(x^n+\theta^n)$
interpolates smoothly between zero and one as the regulator concentration $x$ passes
through the threshold $\theta$, with cooperativity controlled by the Hill coefficient
$n$. However, as we established in our companion
papers~\cite{belgacem2025exploring,belgacem2026logistic}, Hill functions carry a
structural limitation that is particularly detrimental for control design:
$h^+(0,\theta,n) = 0$ identically. When the activating regulator is absent, the
production term vanishes regardless of any control input, causing complete loss of
controllability precisely in the low-expression regimes most common in biological
practice---OFF states, post-perturbation recovery, and transitions between attractors.

Logistic functions $f^+(x,\theta,\lambda) = 1/(1+e^{-\lambda(x-\theta)})$ provide a
principled, biologically grounded alternative~\cite{belgacem2025exploring,belgacem2026logistic,belgacem2026beyond}. Their sigmoid shape closely approximates
Hill behavior for large steepness $\lambda$ while maintaining a strictly positive
output at all finite concentrations, including $x = 0$:
$f^+(0,\theta,\lambda) = 1/(1+e^{\lambda\theta}) > 0$. This non-zero basal response
is not an artifact but a faithful reflection of biological reality: promoter leakiness
sustains measurable transcription even in nominally repressed genes~\cite{huang2015effects},
and long-lived low-expression states are experimentally well-documented in systems
ranging from the yeast GAL network~\cite{acar2005enhancement} to bacterial
operons~\cite{jacob1961genetic}. Beyond biological fidelity, the smooth, bounded, and
analytically invertible nature of the logistic function makes it substantially more
tractable for control design than the power-law Hill function.

The present paper develops the additive state-feedback (Architecture~B)
branch of this control framework, together with two complementary scalar
control results that share the same Lyapunov-based reasoning. It is part
of an ongoing series: Paper~I~\cite{belgacem2025exploring} introduced
logistic functions as robust Hill alternatives; Paper~II~\cite{belgacem2026logistic}
established prevention of expression shutdown and numerical stability;
a companion paper~\cite{belgacem2026beyond} addresses delay-coupled
networks at the general theoretical level, and a sister
paper~\cite{belgacem2026sustained} establishes a delay-driven Hopf
bifurcation in the two-gene logistic oscillator; a companion
paper~\cite{belgacem2026extensions} extends the
framework to immunology, hematopoiesis, and stochastic dynamics (and
supplies the closed-form bistability theorem used in the
monostabilization result of Section~\ref{sec:monostab}). None of these prior papers addresses
the additive state-feedback design and its closed-form Lyapunov
certificate; that is the contribution of the present work.

The present analysis also builds on the author's prior research
programme on stability analysis, reduction, and control of biological
and biochemical
networks~\cite{belgacem2018reduction,belgacem2014cdc,belgacem2014mtns,belgacem2013cab,belgacem2013cdc,belgacem2013acta,belgacem2012ifac,belgacem2014med},
and on the computational analysis of high-dimensional gene network
dynamics, including chaos and bifurcations in ring
circuits~\cite{belgacem2025glass,farcot2019chaos}.

\medskip
\noindent\textbf{Contributions.} The specific contributions of the
present paper are:
\begin{enumerate}
  \item \emph{Additive state-feedback control, local and global.} A
        feedforward-plus-proportional control law
        (Equation~\eqref{eq:sf_law}) that makes any positive setpoint a
        closed-loop equilibrium of the additive-input architecture,
        regardless of whether it is an equilibrium of the uncontrolled
        dynamics. We establish local exponential stability under a
        sufficient Gershgorin gain bound (Theorem~\ref{thm:sf_stability})
        and, via a common quadratic Lyapunov function built on the
        logistic sector bound, \emph{global} exponential stability of the
        nonlinear closed loop under the explicit gain condition
        $(\gamma_1+K_1)(\gamma_2+K_2)>\kappa_1\kappa_2\lambda^2/64$, with
        a closed-form rate and global settling-time bound
        (Theorem~\ref{thm:global_sf}, Corollary~\ref{cor:global_settling}).
        The same Lyapunov function shows the closed loop tracks
        bounded-velocity time-varying references with a proportionally
        bounded, gain-attenuable error (Corollary~\ref{cor:tracking}).
  \item \emph{Logistic structural advantage in additive control.} A formal
        comparison (Proposition~\ref{prop:sf_advantage}) showing that Hill
        activation makes both the regulatory production and the off-diagonal
        Jacobian coupling vanish as the activator setpoint approaches zero
        (network decoupling in additive control), and in the
        output-multiplicative architecture causes outright loss of
        controllability, whereas the
        logistic regulatory production and Jacobian coupling remain strictly
        positive throughout the entire positive orthant. We quantify this
        advantage numerically in Section~\ref{sec:numerical}.
  \item \emph{Closed-form Lyapunov function and quantitative performance
        bounds.} The closed-loop Jacobian belongs to a sign-structured class
        of $2\times 2$ matrices admitting an explicit diagonal Lyapunov
        weighting $P=\operatorname{diag}(B,A)$
        (Proposition~\ref{prop:lyapunov}); this yields an explicit
        settling-time bound that scales as $\rho^{-1}\ln(1/\delta)$ with
        $\rho=\min(\gamma_i+K_i)$ (Corollary~\ref{cor:settling}), matching
        direct ODE simulation to within $1\%$--$16\%$ depending on
        initial-condition direction, and an ISS-type ultimate-bound
        certificate scaling as $D/\rho$ under bounded disturbances
        $\|\boldsymbol{\eta}\|_2\leq D$ (Proposition~\ref{prop:iss}). The
        convergence rate $\rho=\min(\gamma_i+K_i)$ is arbitrarily
        accelerable by gain (Remark~\ref{rem:lyapunov_sf}), and the
        disturbance ultimate bound shrinks correspondingly. The same diagonal
        weighting certifies \emph{any} closed loop sharing this sign
        structure, so the Lyapunov certificate is architecture-independent.
  \item \emph{Monostabilization of bistable switches and delay-uniform
        global stability.} For the scalar logistic self-activation switch
        in its bistable regime, the
        \emph{parameter-uniform monostabilization budget}
        $K^{*}=\kappa\lambda/4-\gamma$ (Theorem~\ref{thm:monostab}) is the
        minimum proportional gain above which the closed loop is
        monostable for every set-point and threshold; and for the scalar
        delayed feedback loop, the Halanay-type condition
        $\gamma+K>\kappa\lambda/4$ (Theorem~\ref{thm:halanay}) yields
        \emph{delay-uniform} global exponential stability, with
        closed-form two-sided bounds on the delay-dependent rate
        (Corollary~\ref{cor:halanay_rate}). The global Lipschitz bound
        $\lambda/4$ of the logistic derivative underpins not only these
        scalar results but also the global 2-D theorem
        (Theorem~\ref{thm:global_sf}), giving the paper a single
        structural backbone. A worked
        T-cell switch example
        (Section~\ref{sec:control_example}, Figure~\ref{fig:control})
        illustrates both bounds.
  \item \emph{Numerical comparison.} A side-by-side benchmark
        (Section~\ref{sec:numerical}) using identical setpoints, gains, and
        parameters validates the Lyapunov-bound prediction and exposes the
        structural decoupling of Hill state-feedback as the activator
        setpoint approaches zero, in agreement with
        Proposition~\ref{prop:sf_advantage}.
\end{enumerate}

\medskip
\noindent\textbf{Outline.} Section~\ref{sec:model} introduces the
logistic gene regulatory model and the two control architectures.
Section~\ref{sec:state_feedback} presents the additive state-feedback
design with the local Gershgorin stability theorem, a global
exponential stability theorem via a common quadratic Lyapunov function,
and the logistic structural advantage. Section~\ref{sec:lyapunov} develops the
closed-form Lyapunov framework that supplies an explicit settling-time
bound and an ISS certificate for the closed loop.
Section~\ref{sec:monostab} establishes the parameter-uniform
monostabilization budget $K^*=\kappa\lambda/4-\gamma$ for the scalar
self-activation switch. Section~\ref{sec:halanay} gives the
Halanay-type delay-uniform global exponential stability theorem.
Section~\ref{sec:control_example} works out the T-cell switch example.
Section~\ref{sec:numerical} presents numerical simulations.
Section~\ref{sec:conclusion} concludes with future directions.

\section{Logistic Gene Regulatory Network Model and Control Architecture}
\label{sec:model}

\begin{definition}[Logistic sigmoid functions]
\label{def:logistic}
For $x\in\mathbb{R}$, threshold $\theta>0$, and steepness $\lambda>0$, define the
\emph{activating} and \emph{repressing} logistic sigmoids:
\begin{equation}
\begin{aligned}
f^+(x;\theta,\lambda) &= \frac{1}{1+e^{-\lambda(x-\theta)}}, \\
f^-(x;\theta,\lambda) &= \frac{1}{1+e^{\lambda(x-\theta)}} = 1 - f^+(x;\theta,\lambda).
\end{aligned}
\label{eq:logistic_def}
\end{equation}
Both functions map $\mathbb{R}$ into $(0,1)$ and satisfy
$f^+(0;\theta,\lambda) = 1/(1+e^{\lambda\theta}) > 0$ and
$f^-(0;\theta,\lambda) = 1/(1+e^{-\lambda\theta}) < 1$.
Their derivatives satisfy $\tfrac{d}{dx}f^\pm = \pm\lambda f^\pm(1-f^\pm)$,
with the universal bound $f(1-f)\leq 1/4$, attained at the threshold $x=\theta$.
\end{definition}

Controlling biological networks is a frontier of substantial practical promise.
Our prior qualitative-control work on piecewise-affine and Hill-based
genetic networks~\cite{belgacem2020control,chambon2020qualitative}
demonstrated that sliding-mode and qualitative feedback designs offer
rigorous frameworks for stabilising
gene expression. Optogenetic platforms such as the
Diya light-illumination system~\cite{kumar2023diya} are now mature enough to
implement closed-loop feedback at the single-cell or population level.
The control-theoretic difficulty with Hill functions is structural rather
than incidental: the activating factor
$h^+(x,\theta,n) = x^n/(x^n+\theta^n)$ vanishes identically at $x=0$,
so when an activating regulator is absent, the production term is zero
\emph{independently of the control input}, and the system is uncontrollable
in the activator-absent regime. The logistic function
$f^+(x,\theta,\lambda)$ of Definition~\ref{def:logistic} satisfies
$f^+(0;\theta,\lambda) = 1/(1+e^{\lambda\theta}) > 0$, so the production
term never vanishes and the system remains controllable throughout the
strictly positive orthant. This is the structural property that drives every
comparison result in the present paper~\cite{belgacem2025exploring,belgacem2026logistic}.

\subsection{Biological Foundations: Persistent Basal Expression}
\label{sec:bio}

The non-zero basal output of the logistic function is not a mathematical
convenience: it is a faithful reflection of biological reality. In
activator-regulated systems, transcription proceeds at leaky basal rates even
when the activator is absent, and these low-expression states persist as long
as the activator is absent. The yeast GAL regulatory network is the canonical
example: cells maintain the OFF state with measurable expression across
multiple cell divisions without galactose~\cite{acar2005enhancement}.
Quantitative single-molecule studies have shown that promoter leakage
stabilises low-expression states in auto-regulatory circuits while reducing
expression noise~\cite{huang2015effects}. Long lifetimes of the basal
attractor follow from the rare-event statistics of binding/unbinding kinetics
in genetic switches~\cite{walczak2005absolute}.

Repressor-free states are equally persistent. In bacterial operons such as
the \emph{lac} system, knockout of the \emph{lacI} repressor yields
constitutive expression of \emph{lacZYA} for as long as the cell survives,
without inducer~\cite{jacob1961genetic}. In phage~$\lambda$ lysogeny,
inactivation of the CI repressor switches the cell to the lytic programme
and dominates dynamics until lysis~\cite{ackers1982quantitative}.

The logistic framework captures both regimes intrinsically: the activator
basal $f^+(0;\theta,\lambda) = (1+e^{\lambda\theta})^{-1}$ is positive but
small (engineerable via $\lambda\theta$), and the repressor-absent value
$f^-(0;\theta,\lambda) = (1+e^{-\lambda\theta})^{-1}$ is close to but less
than~$1$. Hill functions force $h^+(0) = 0$ and $h^-(0) = 1$ identically,
which is incompatible with the leaky biology and---more importantly for the
present paper---incompatible with controllability at zero
input~\cite{belgacem2025exploring,belgacem2026logistic}.

\subsection{Controlled Logistic-Based Network Model}
\label{sec:logistic-model}

A controlled logistic-based GRN can take several mathematical forms depending
on how the external input couples to the regulatory machinery. Two physically
motivated architectures are used in this paper, both preserving the
controllability property of the logistic framework:

\smallskip
\noindent\textbf{Architecture A (argument-modulating).}\;
The control input $u_{ij}\geq 0$ scales the regulator concentration entering
the logistic argument, modelling situations where the input modulates the
\emph{effective} concentration of a regulator (e.g.\ optogenetic switching
that activates a fraction $u_{ij}$ of regulator molecules):
\begin{equation}
\dot{x}_i \;=\; \kappa_i\, f_i\bigl(\mathbf{x};\mathbf{u}_i\bigr) - \gamma_i x_i,
\qquad i=1,\ldots,n,
\label{eq:control_expression}
\end{equation}
where each logistic factor in $f_i$ has the form
$1/\bigl(1+e^{\mp\lambda(u_{ij}x_j-\theta_{ij})}\bigr)$.

\smallskip
\noindent\textbf{Architecture B (additive).}\;
The control input $u_i\in\mathbb{R}$ is added directly to the production rate,
modelling externally supplied transcript or protein:
\begin{equation}
\dot{x}_i \;=\; \kappa_i\, f_i(\mathbf{x}) - \gamma_i x_i + u_i,
\qquad i=1,\ldots,n,
\label{eq:additive_control}
\end{equation}
where $f_i$ is a product of plain logistic factors of $x_j$ alone.

\smallskip
In both architectures $\kappa_i>0$ is the maximal production rate, $\gamma_i>0$
the degradation rate, and $f_i\in (0,1]$ is a product of logistic factors
$f^\pm$ encoding the regulatory logic. For one activator
$j\in\mathcal{A}_i$ and one repressor $k\in\mathcal{R}_i$ in Architecture~B:
\begin{equation}
f_i(\mathbf{x}) \;=\;
\frac{1}{1+e^{-\lambda(x_j-\theta_{ij})}}\cdot
\frac{1}{1+e^{\lambda(x_k-\theta_{ik})}}.
\label{eq:fi-AR}
\end{equation}
At $x_j=x_k=0$:
$f_i(\mathbf 0) = (1+e^{\lambda\theta_{ij}})^{-1}(1+e^{-\lambda\theta_{ik}})^{-1}>0$,
so the logistic regulatory production is strictly positive. The Hill counterpart
$x_j^n/(x_j^n+\theta_{ij}^n)\cdot \theta_{ik}^n/(x_k^n+\theta_{ik}^n)$ vanishes
at $x_j=0$ (the activator-absent regime), so the regulatory contribution
disappears entirely. The consequences differ by architecture. In
Architecture~B the vanishing of $h_i$ leaves the additive control alone to
supply the entire production flux of gene~$i$. A sharper effect arises in the
\emph{output-multiplicative} variant
$\dot{x}_i=\kappa_i u_i f_i(\mathbf{x})-\gamma_i x_i$, in which the input
enters as a multiplicative gain on the regulatory output: there
$h_i(\mathbf{x})=0$ annihilates all control authority over $\dot{x}_i$---an
outright loss of controllability---whereas the logistic $f_i^{\mathrm{log}}>0$
retains it everywhere on the closed positive orthant. In Architecture~A, by
contrast, the input acts through the product $u_{ij}x_j$, which vanishes at
$x_j=0$ \emph{independently of the regulatory function}: both the Hill and the
logistic models then lose controllability through regulator~$j$ at the
boundary $x_j=0$, and Architecture~A is controllable only on the open orthant
$x_j>0$. For an $n$-dimensional network with multiple parallel activators and
repressors, $f_i$ generalises to
$f_i(\mathbf{x}) =
\prod_{j\in\mathcal{A}_i} f^+(x_j;\theta_{ij},\lambda)
\cdot
\prod_{k\in\mathcal{R}_i} f^-(x_k;\theta_{ik},\lambda)$,
which remains strictly positive on $\mathbb{R}_{\geq 0}^n$ for any choice of
control architecture.

\smallskip
\noindent\textbf{Choice of architecture.}\;
The two architectures call for different controller designs.
Architecture~A, in which the control enters nonlinearly through the
logistic argument, is naturally suited to sliding-mode control: the equivalent
control can be solved in closed form by inverting the logistic, and
the boundary-layer linearisation yields a clean stability bound.
Architecture~B, in which the control enters affinely, is naturally
suited to state-feedback: a feedforward-plus-proportional law makes
any positive setpoint a closed-loop equilibrium, and Gershgorin's
theorem yields an explicit gain condition. The present paper develops
the Architecture~B branch in detail.

\smallskip
\noindent\textbf{Two-gene benchmark.}\;
Throughout the rest of this paper we use the following two-gene
oscillator in Architecture~B: gene~1 is repressed by gene~2, gene~2
is activated by gene~1, and each gene carries an additive control input
$u_i$:
\begin{equation}
\boxed{
\begin{aligned}
\dot{x}_1 &= \frac{\kappa_1}{1+e^{\lambda(x_2 - \theta_2)}} - \gamma_1 x_1 + u_1,\\[2pt]
\dot{x}_2 &= \frac{\kappa_2}{1+e^{-\lambda(x_1 - \theta_1)}} - \gamma_2 x_2 + u_2.
\end{aligned}
}
\label{eq:two_gene_B}
\end{equation}
The uncontrolled limit $u_1=u_2=0$ is the classical activator--repressor
oscillator. At $\mathbf{x}=\mathbf{0}$ the regulatory productions evaluate
to $\kappa_1/(1+e^{-\lambda\theta_2})$ and $\kappa_2/(1+e^{\lambda\theta_1})$,
both strictly positive, so the system has non-zero
``wake-up'' drive even from a deep OFF state, regardless of $\mathbf{u}$.

\section{State-Feedback Control Strategies}
\label{sec:state_feedback}

For Architecture~B (additive control,~\eqref{eq:additive_control}), we
develop a feedforward-plus-proportional-feedback design that makes any
positive setpoint $\mathbf{x}_d$ a closed-loop equilibrium---not necessarily
an equilibrium of the uncontrolled system. The control law is
\begin{equation}
u_i \;=\; \underbrace{\gamma_i x_{d,i} - \kappa_i f_i(\mathbf{x}_d)}_{u_i^{\mathrm{ff}}}
   \;-\; K_i (x_i - x_{d,i}),\qquad K_i>0.
\label{eq:sf_law}
\end{equation}
The feedforward term $u_i^{\mathrm{ff}}$ exactly cancels the steady-state
mismatch $\gamma_i x_{d,i} - \kappa_i f_i(\mathbf{x}_d)$, while the feedback
term provides linear restoring action. Direct substitution
into~\eqref{eq:additive_control} confirms that $\mathbf{x}_d$ is an
equilibrium of the closed-loop system for every positive setpoint, removing
the implicit requirement (made in earlier formulations) that $\mathbf{x}_d$
already be an equilibrium of the uncontrolled dynamics.

Linearizing around $\mathbf{x}_d$ with $\mathbf{e}=\mathbf{x}-\mathbf{x}_d$:
\begin{equation}
\dot{\mathbf{e}} \;=\; \bigl(J_f - \Gamma - K\bigr)\mathbf{e}+O(\|\mathbf{e}\|^2),
\label{eq:linearized_cl}
\end{equation}
where $[J_f]_{ij}=\kappa_i \,(\partial f_i/\partial x_j)|_{\mathbf{x}_d}$,
$\Gamma=\mathrm{diag}(\gamma_i)$, and $K=\mathrm{diag}(K_i)$.

\subsection{Local exponential stability: a Gershgorin gain bound}
\label{sec:sf_local}

\begin{theorem}[State-Feedback Exponential Stability]
\label{thm:sf_stability}
For the additive system~\eqref{eq:additive_control} under the law~\eqref{eq:sf_law},
$\mathbf{x}_d$ is an equilibrium for any $K_i\geq 0$. A sufficient condition for
exponential stability of the linearization is the Gershgorin bound:
\begin{equation}
K_i \;>\; [J_f]_{ii}+\sum_{j\neq i}\bigl|[J_f]_{ij}\bigr|-\gamma_i,
\quad\forall i,
\label{eq:gershgorin_K}
\end{equation}
in which case every eigenvalue of $J_f-\Gamma-K$ lies in the open left half-plane.
In particular, for the two-gene logistic activator--repressor network without
self-regulation, $[J_f]_{ii}=0$ and the condition reduces to
$K_i>|[J_f]_{ij}|-\gamma_i$ for the single off-diagonal entry $j\neq i$, which is
always achievable for sufficiently large $K_i$.
\end{theorem}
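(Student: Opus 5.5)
The proof has three parts: the equilibrium claim by direct substitution, stability by Gershgorin's circle theorem, and then specialization to the two-gene network.

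\emph{Equilibrium.} Substitute the law~\eqref{eq:sf_law} into~\eqref{eq:additive_control} and evaluate at $\mathbf{x}=\mathbf{x}_d$. The feedback term $-K_i(x_i-x_{d,i})$ vanishes. The feedforward term gives
\[
\dot x_i=\kappa_i f_i(\mathbf{x}_d)-\gamma_i x_{d,i}+\gamma_i x_{d,i}-\kappa_i f_i(\mathbf{x}_d)=0 .
\]
This holds for every $i$ and every $K_i\geq 0$. No positivity of $K_i$ is needed at this step, and $\mathbf{x}_d$ need not be an equilibrium of the uncontrolled system. Since each $f_i$ is a finite product of smooth logistic factors, the closed-loop vector field is $C^\infty$. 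Its Taylor expansion at $\mathbf{x}_d$ is exactly~\eqref{eq:linearized_cl}, with closed-loop Jacobian $M=J_f-\Gamma-K$, because $\Gamma$ and $K$ enter linearly.

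\emph{Gershgorin.} The entries of $M$ are $M_{ii}=[J_f]_{ii}-\gamma_i-K_i$ and $M_{ij}=[J_f]_{ij}$ for $j\neq i$. By Gershgorin's circle theorem, every eigenvalue $\mu$ of $M$ lies in some disc $|\mu-M_{ii}|\leq R_i$, where $R_i=\sum_{j\neq i}|[J_f]_{ij}|$. Hence
\[
\operatorname{Re}\mu\leq M_{ii}+R_i=[J_f]_{ii}+\sum_{j\neq i}|[J_f]_{ij}|-\gamma_i-K_i ,
\]
and this is strictly negative under~\eqref{eq:gershgorin_K}. So $M$ is Hurwitz, and the linearization is exponentially stable with decay rate at least $\min_i(\gamma_i+K_i-[J_f]_{ii}-R_i)$. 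Lyapunov's indirect method (via the $O(\|\mathbf e\|^2)$ remainder) then upgrades this to local exponential stability of $\mathbf{x}_d$ for the nonlinear closed loop.

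\emph{Two-gene case.} Differentiate~\eqref{eq:two_gene_B}. Since $\dot x_1$ does not depend on $x_1$ and $\dot x_2$ does not depend on $x_2$, we get $[J_f]_{11}=[J_f]_{22}=0$. The off-diagonal entries follow from the derivative identity in Definition~\ref{def:logistic}:
\[
[J_f]_{12}=-\kappa_1\lambda f^-(1-f^-)\big|_{x_{d,2}},\qquad
[J_f]_{21}=\kappa_2\lambda f^+(1-f^+)\big|_{x_{d,1}} .
\]
Condition~\eqref{eq:gershgorin_K} then reduces to $K_i>|[J_f]_{ij}|-\gamma_i$. Achievability is immediate because these are finite numbers. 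In fact the bound $f(1-f)\le 1/4$ gives $|[J_f]_{ij}|\le\kappa_i\lambda/4$, so the uniform choice $K_i>\kappa_i\lambda/4-\gamma_i$ works for every setpoint.

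There is no real obstacle. The only points needing care are that Gershgorin bounds real parts using the row-wise sums, and that the diagonal entries of $J_f$ keep their sign rather than entering through an absolute value.
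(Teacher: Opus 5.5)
Your proposal is correct and follows the paper's proof essentially verbatim. You obtain the equilibrium by direct substitution of the law at $\mathbf{x}_d$, and then apply the Gershgorin disc theorem with discs centred at $[J_f]_{ii}-\gamma_i-K_i$ and radii $\sum_{j\neq i}|[J_f]_{ij}|$. Your additions go slightly beyond what the paper writes out but are all sound: the explicit two-gene Jacobian entries, the setpoint-uniform choice $K_i>\kappa_i\lambda/4-\gamma_i$, and the appeal to Lyapunov's indirect method to pass from the linearization to the nonlinear closed loop.
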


\begin{proof}
Substituting~\eqref{eq:sf_law} at $\mathbf{x}=\mathbf{x}_d$ gives
$\dot{x}_i\big|_{\mathbf{x}_d}=\kappa_i f_i(\mathbf{x}_d)-\gamma_i x_{d,i}
+\gamma_i x_{d,i}-\kappa_i f_i(\mathbf{x}_d)=0$, so $\mathbf{x}_d$ is an equilibrium.
The linearization~\eqref{eq:linearized_cl} follows directly. By the Gershgorin
disc theorem, every eigenvalue of $J_f-\Gamma-K$ lies in some disc centered at
$[J_f]_{ii}-\gamma_i-K_i$ with radius $\sum_{j\neq i}|[J_f]_{ij}|$.
Condition~\eqref{eq:gershgorin_K} places every disc strictly in the open left
half-plane, ensuring exponential stability.
\end{proof}

\begin{remark}[Explicit Gershgorin bounds for the two-gene oscillator]
\label{rem:gershgorin_numerical}
For the two-gene activator--repressor network at setpoint $(x_{d,1},x_{d,2})=(0.8,0.6)$
with $\kappa_i=1$, $\gamma_i=0.5$, $\lambda=5$, $\theta_i=0.5$ and no
self-regulation ($[J_f]_{ii}=0$), the off-diagonal entries are:
\begin{align*}
|[J_f]_{12}| &= \kappa_1\lambda\,f^-(1-f^-)\big|_{x_{d,2}}\\
  &\approx 5\times 0.378\times 0.622 \approx 1.175,\\
|[J_f]_{21}| &= \kappa_2\lambda\,f^+(1-f^+)\big|_{x_{d,1}}\\
  &\approx 5\times 0.818\times 0.182 \approx 0.746.
\end{align*}
The Gershgorin condition~\eqref{eq:gershgorin_K} therefore requires
$K_1 > 1.175 - 0.5 = 0.675$ and $K_2 > 0.746 - 0.5 = 0.246$
for exponential stability of the linearisation. The corresponding
closed-loop convergence rate is $\rho = \min(\gamma_i+K_i)$
(Remark~\ref{rem:lyapunov_sf} of Section~\ref{sec:lyapunov}): e.g.\
the minimum stabilising gains $K_1=0.68$, $K_2=0.25$ yield
$\rho \approx 0.75$, while the moderately larger gains $K_1=K_2=1$
used in Section~\ref{sec:numerical} yield $\rho=1.5$, a twofold
increase obtained at the cost of a correspondingly larger
feedforward-plus-feedback control amplitude. The disturbance ultimate
bound of Proposition~\ref{prop:iss} shrinks correspondingly.
For comparison, the Hill activator coupling at the same setpoint gives
$|[J_f^{\mathrm{Hill}}]_{21}| = \kappa_2 n\theta_1^n (x_{d,1})^{n-1}/(\theta_1^n+(x_{d,1})^n)^2\approx 0.505$
(weaker coupling at this setpoint), but this entry vanishes as
$x_{d,1}\to 0$, leaving only the diagonal gain $-\gamma_i-K_i$ to stabilise
the network---the exact loss of coupling formalised in
Proposition~\ref{prop:sf_advantage}.
\end{remark}

\subsection{Global exponential stability via a common quadratic Lyapunov function}
\label{sec:sf_global}

Theorem~\ref{thm:sf_stability} certifies exponential stability of the
\emph{linearisation} around $\mathbf{x}_d$; it is a local result. The
next theorem upgrades it to a \emph{global, fully nonlinear} guarantee
by exploiting the one structural property that distinguishes the
logistic model---the global Lipschitz bound $\lambda/4$ on the
regulatory derivative---and is the same bound that drives the scalar
results of Sections~\ref{sec:monostab}--\ref{sec:halanay}.

\begin{theorem}[Global Exponential Stability via a Common Quadratic Lyapunov Function]
\label{thm:global_sf}
Consider the two-gene benchmark~\eqref{eq:two_gene_B} under the
state-feedback law~\eqref{eq:sf_law}, and write
$\alpha_i:=\gamma_i+K_i$ for the closed-loop decay rates and
$c_i:=\kappa_i\lambda/4$ for the maximal coupling rates. If
\begin{equation}
(\gamma_1+K_1)(\gamma_2+K_2) \;>\; \frac{\kappa_1\kappa_2\lambda^2}{64},
\label{eq:global_cond}
\end{equation}
then the diagonal quadratic form $V(\mathbf{e})=c_2 e_1^2+c_1 e_2^2$ is a
global Lyapunov function for the error $\mathbf{e}=\mathbf{x}-\mathbf{x}_d$,
and the setpoint $\mathbf{x}_d$ is \emph{globally exponentially stable}:
for every initial condition $\mathbf{x}(0)\in\mathbb{R}^2$,
\begin{equation}
\|\mathbf{x}(t)-\mathbf{x}_d\|_2 \;\le\;
   \sqrt{\frac{\max(\kappa_1,\kappa_2)}{\min(\kappa_1,\kappa_2)}}\;
   \|\mathbf{x}(0)-\mathbf{x}_d\|_2\;e^{-\rho_{\mathrm{glob}}\,t}
\label{eq:global_decay}
\end{equation}
for all $t\ge 0$, with the explicit rate
\begin{equation}
\rho_{\mathrm{glob}} \;=\;
\tfrac{1}{2}\Bigl[(\alpha_1+\alpha_2)
   -\sqrt{(\alpha_1-\alpha_2)^2+c_1c_2}\Bigr] \;>\;0.
\label{eq:rho_glob}
\end{equation}
\end{theorem}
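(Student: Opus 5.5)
The plan is to write the closed-loop error dynamics exactly (not linearised), use the mean value theorem together with the global bound $f(1-f)\le 1/4$ from Definition~\ref{def:logistic} to put them in a time-varying linear form with bounded coefficients, and then show that $V(\mathbf e)=c_2e_1^2+c_1e_2^2$ satisfies $\dot V\le -2\rho_{\mathrm{glob}}V$.

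\textbf{Error dynamics.} Substituting~\eqref{eq:sf_law} into~\eqref{eq:two_gene_B} and subtracting the equilibrium identity gives
\[
\dot e_1=\kappa_1\bigl[f^-(x_2)-f^-(x_{d,2})\bigr]-\alpha_1e_1,\qquad
\dot e_2=\kappa_2\bigl[f^+(x_1)-f^+(x_{d,1})\bigr]-\alpha_2e_2 .
\]
By the mean value theorem and $\tfrac{d}{dx}f^\pm=\pm\lambda f^\pm(1-f^\pm)$, these become
\[
\dot e_1=-a(t)e_2-\alpha_1e_1,\qquad \dot e_2=b(t)e_1-\alpha_2e_2,
\]
where $a(t)\in[0,c_1]$ and $b(t)\in[0,c_2]$ hold globally along any trajectory. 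The right-hand side of~\eqref{eq:two_gene_B} is globally Lipschitz, so solutions exist for all $t\ge0$.

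\textbf{Lyapunov derivative.} Differentiating $V$ gives
\[
\dot V=-2c_2\alpha_1e_1^2-2c_1\alpha_2e_2^2+2(c_1b-c_2a)e_1e_2 .
\]
Since $c_1b\in[0,c_1c_2]$ and $c_2a\in[0,c_1c_2]$, the cross coefficient satisfies $|c_1b-c_2a|\le c_1c_2$. This is the step where the diagonal weighting $P=\operatorname{diag}(c_2,c_1)$ pays off: both couplings are scaled to the same bound $c_1c_2$.

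\textbf{Choice of rate.} We need $\dot V+2\rho V\le0$ for all $e$ and all admissible $a,b$. It suffices that the quadratic form
\[
2c_2(\alpha_1-\rho)e_1^2+2c_1(\alpha_2-\rho)e_2^2-2c_1c_2|e_1e_2|
\]
be nonnegative. With $\rho<\min(\alpha_1,\alpha_2)$, this holds iff $c_1c_2(\alpha_1-\rho)(\alpha_2-\rho)\ge c_1^2c_2^2/4$, that is, $(\alpha_1-\rho)(\alpha_2-\rho)\ge c_1c_2/4$.

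The smaller root of $(\alpha_1-\rho)(\alpha_2-\rho)=c_1c_2/4$ is exactly $\rho_{\mathrm{glob}}$ in~\eqref{eq:rho_glob}. It lies below $\min(\alpha_1,\alpha_2)$, and it is positive iff $\alpha_1\alpha_2>c_1c_2/4=\kappa_1\kappa_2\lambda^2/64$, which is condition~\eqref{eq:global_cond}.

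\textbf{Conclusion.} From $\dot V\le-2\rho_{\mathrm{glob}}V$, Gr\"onwall's inequality gives $V(t)\le V(0)e^{-2\rho_{\mathrm{glob}}t}$. The sandwich $\min(c_1,c_2)\|\mathbf e\|^2\le V\le\max(c_1,c_2)\|\mathbf e\|^2$, with $c_1/c_2=\kappa_1/\kappa_2$, then yields~\eqref{eq:global_decay} after taking square roots.

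The main obstacle is the cross term. The mean-value coefficients $a(t),b(t)$ are state-dependent, so one cannot exploit the skew-symmetric sign structure (repression versus activation) for exact cancellation. The proof therefore has to accept the worst case $|c_1b-c_2a|\le c_1c_2$, and I would verify that this worst case gives precisely the stated $c_1c_2$ under the square root, rather than something smaller.
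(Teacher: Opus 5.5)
Your proposal is correct and follows the paper's proof essentially step for step. Both use the exact error dynamics and mean-value sector coefficients in $[0,c_i]$. Both bound the cross term by $|c_1b-c_2a|\le c_1c_2$. Both reduce positive semidefiniteness of the shifted form to $(\alpha_1-\rho)(\alpha_2-\rho)\ge c_1c_2/4$, whose smaller root is $\rho_{\mathrm{glob}}$, and finish with Gr\"onwall and the $\min/\max(c_1,c_2)$ sandwich. Your explicit note on global existence of solutions, via the globally Lipschitz right-hand side, is a small point the paper leaves implicit.
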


\begin{proof}
Write $\mathbf{e}=\mathbf{x}-\mathbf{x}_d$. Substituting~\eqref{eq:sf_law}
into~\eqref{eq:two_gene_B} and using
$\kappa_i f_i(\mathbf{x}_d)+u_i^{\mathrm{ff}}=\gamma_i x_{d,i}$ gives the
exact (un-linearised) error dynamics
\[
\dot e_1 = -\alpha_1 e_1 + \kappa_1 g_1(e_2),\qquad
\dot e_2 = -\alpha_2 e_2 + \kappa_2 g_2(e_1),
\]
with $g_1(e_2):=f^-(x_{d,2}+e_2)-f^-(x_{d,2})$ and
$g_2(e_1):=f^+(x_{d,1}+e_1)-f^+(x_{d,1})$. Since $f^-$ is strictly
decreasing, $f^+$ strictly increasing, and both have derivative bounded
in magnitude by $\lambda/4$ (Definition~\ref{def:logistic}), the
mean-value theorem yields the \emph{sector bounds}
\[
g_1(e_2)=-\sigma_1 e_2,\quad g_2(e_1)=\sigma_2 e_1,\quad
\sigma_1,\sigma_2\in[0,\lambda/4],
\]
where $\sigma_1,\sigma_2$ are state-dependent. Differentiating
$V=c_2 e_1^2+c_1 e_2^2$ along the trajectories,
\begin{align*}
\dot V &= -2c_2\alpha_1 e_1^2 - 2c_1\alpha_2 e_2^2
        + 2\bigl(c_1\kappa_2\sigma_2 - c_2\kappa_1\sigma_1\bigr)e_1 e_2\\
       &\le -2c_2\alpha_1 e_1^2 - 2c_1\alpha_2 e_2^2 + 2c_1c_2\,|e_1|\,|e_2|
        \;=\; -\,\mathbf{z}^{\!\top} R\,\mathbf{z},
\end{align*}
where $\mathbf{z}=(|e_1|,|e_2|)^{\!\top}$, the cross-term coefficient is
bounded via $\kappa_1\sigma_1\le c_1$ and $\kappa_2\sigma_2\le c_2$ (so
$|c_1\kappa_2\sigma_2-c_2\kappa_1\sigma_1|\le c_1c_2$), and
\[
R=\begin{bmatrix} 2c_2\alpha_1 & -c_1c_2\\[2pt] -c_1c_2 & 2c_1\alpha_2\end{bmatrix}.
\]
Condition~\eqref{eq:global_cond}---equivalently $4\alpha_1\alpha_2>c_1c_2$,
since $c_1c_2=\kappa_1\kappa_2\lambda^2/16$---is exactly
$\det R=c_1c_2(4\alpha_1\alpha_2-c_1c_2)>0$. With $R_{11}>0$ this makes
$R\succ0$, so $V$ strictly decreases off $\mathbf{x}_d$ and is a global
Lyapunov function. Sharpening, $R-2\rho_{\mathrm{glob}}\operatorname{diag}(c_2,c_1)
\succeq0$ for $\rho_{\mathrm{glob}}$ as in~\eqref{eq:rho_glob} (its
positive-semidefiniteness reduces to
$4(\alpha_1-\rho_{\mathrm{glob}})(\alpha_2-\rho_{\mathrm{glob}})\ge c_1c_2$),
hence $\dot V\le-2\rho_{\mathrm{glob}}V$. Gr\"onwall's lemma gives
$V(t)\le V(0)e^{-2\rho_{\mathrm{glob}}t}$; combining with
$\min(c_1,c_2)\|\mathbf{e}\|_2^2\le V\le\max(c_1,c_2)\|\mathbf{e}\|_2^2$
and $\max(c_1,c_2)/\min(c_1,c_2)=\max(\kappa_1,\kappa_2)/\min(\kappa_1,\kappa_2)$
yields~\eqref{eq:global_decay}.
\end{proof}

\begin{corollary}[Global settling-time bound]
\label{cor:global_settling}
Under condition~\eqref{eq:global_cond}, for any tolerance
$\delta\in(0,1)$ the relative-to-initial settling time obeys
\begin{equation}
t_{s,\delta}\;\le\;\frac{1}{\rho_{\mathrm{glob}}}
\Bigl[\ln(1/\delta)+\tfrac12\ln\tfrac{\max(\kappa_1,\kappa_2)}{\min(\kappa_1,\kappa_2)}\Bigr],
\end{equation}
for \emph{every} initial condition in $\mathbb{R}^2$---not only those in a
neighbourhood of $\mathbf{x}_d$. The bound follows by rearranging
$\sqrt{\max(\kappa_1,\kappa_2)/\min(\kappa_1,\kappa_2)}\,
e^{-\rho_{\mathrm{glob}}t}\le\delta$ in~\eqref{eq:global_decay}, exactly
as in Corollary~\ref{cor:settling}.
\end{corollary}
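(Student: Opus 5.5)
The plan is to read the settling time directly off the global decay estimate~\eqref{eq:global_decay} of Theorem~\ref{thm:global_sf}, which already holds for every initial condition in $\mathbb{R}^2$. I interpret the relative-to-initial settling time as
\[
t_{s,\delta} := \inf\{T\ge 0:\ \|\mathbf{x}(t)-\mathbf{x}_d\|_2\le\delta\|\mathbf{x}(0)-\mathbf{x}_d\|_2\ \ \forall t\ge T\}.
\]
It then suffices to exhibit one explicit time $T^*$ beyond which this inequality is guaranteed.

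\emph{Step 1: the trivial case.} If $\mathbf{x}(0)=\mathbf{x}_d$, then $\mathbf{x}_d$ is an equilibrium by Theorem~\ref{thm:sf_stability}, so the trajectory stays there and $t_{s,\delta}=0$.

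\emph{Step 2: the main estimate.} Assume $\mathbf{x}(0)\neq\mathbf{x}_d$. Write $M:=\sqrt{\max(\kappa_1,\kappa_2)/\min(\kappa_1,\kappa_2)}\ge 1$, and recall from~\eqref{eq:rho_glob} that $\rho_{\mathrm{glob}}>0$ under~\eqref{eq:global_cond}. Estimate~\eqref{eq:global_decay} gives
\[
\|\mathbf{x}(t)-\mathbf{x}_d\|_2\le M e^{-\rho_{\mathrm{glob}}t}\|\mathbf{x}(0)-\mathbf{x}_d\|_2 .
\]
Since $t\mapsto Me^{-\rho_{\mathrm{glob}}t}$ is strictly decreasing, $Me^{-\rho_{\mathrm{glob}}t}\le\delta$ holds for all $t\ge T^*$, where
\[
T^*=\rho_{\mathrm{glob}}^{-1}\ln(M/\delta)=\rho_{\mathrm{glob}}^{-1}\bigl[\ln(1/\delta)+\tfrac12\ln\bigl(\max(\kappa_1,\kappa_2)/\min(\kappa_1,\kappa_2)\bigr)\bigr].
\]
Taking logarithms is legitimate because $\delta\in(0,1)$ and $M\ge1$. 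Hence $t_{s,\delta}\le T^*$, which is the claimed bound.

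\emph{Step 3: uniformity.} Nothing in Steps 1--2 depends on the initial condition except through the ratio, which cancels. So the bound holds for every $\mathbf{x}(0)\in\mathbb{R}^2$, in contrast with the local Corollary~\ref{cor:settling}.

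\emph{Main obstacle.} There is essentially no analytic difficulty. The only points needing care are fixing the definition of the relative settling time so that ``for all $t\ge T$'' follows from monotonicity of the exponential envelope, and handling the degenerate case $\mathbf{x}(0)=\mathbf{x}_d$ separately.
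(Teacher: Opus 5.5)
Your proposal is correct and is essentially the paper's argument. The corollary comes from solving $\sqrt{\max(\kappa_1,\kappa_2)/\min(\kappa_1,\kappa_2)}\,e^{-\rho_{\mathrm{glob}}t}\le\delta$ for $t$ in the global estimate~\eqref{eq:global_decay}, whose constants do not depend on $\mathbf{x}(0)$; your ``for all $t\ge T$'' definition of $t_{s,\delta}$ and the monotonicity of the envelope just make that rearrangement precise, and the separate case $\mathbf{x}(0)=\mathbf{x}_d$ is harmless but unnecessary, since~\eqref{eq:global_decay} already gives zero error there.
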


\begin{corollary}[Tracking of time-varying references]
\label{cor:tracking}
Suppose condition~\eqref{eq:global_cond} holds and the regulation target
is a $C^{1}$ time-varying reference $\mathbf{x}_d(t)$ of bounded velocity,
$\sup_{t\ge 0}\|\dot{\mathbf{x}}_d(t)\|_\infty\le\nu$, the
feedforward in~\eqref{eq:sf_law} being evaluated along
$\mathbf{x}_d(t)$. Then the tracking error is globally ultimately bounded,
\begin{equation}
\limsup_{t\to\infty}\|\mathbf{x}(t)-\mathbf{x}_d(t)\|_2
\;\le\;\frac{\sqrt{2}}{\rho_{\mathrm{glob}}}\,
       \frac{\max(\kappa_1,\kappa_2)}{\min(\kappa_1,\kappa_2)}\;\nu,
\label{eq:tracking_bound}
\end{equation}
and this bound vanishes as $\nu\to 0$, recovering the exact regulation of
constant setpoints (Theorem~\ref{thm:global_sf}). The error is
gain-attenuable: larger $K_i$ raise the rate $\rho_{\mathrm{glob}}$.
\end{corollary}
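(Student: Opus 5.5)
The plan is to repeat the Lyapunov argument of Theorem~\ref{thm:global_sf} on the time-varying error $\mathbf{e}(t)=\mathbf{x}(t)-\mathbf{x}_d(t)$. The only new ingredient is a drift term $-\dot{\mathbf{x}}_d$, which is handled by a comparison argument on $W=\sqrt{V}$.

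\textbf{Step 1 (exact error dynamics).} With the feedforward evaluated along $\mathbf{x}_d(t)$, we have $\kappa_i f_i(\mathbf{x}_d(t))+u_i^{\mathrm{ff}}(t)=\gamma_i x_{d,i}(t)$ at each instant. Substituting into~\eqref{eq:two_gene_B} and subtracting $\dot{\mathbf{x}}_d$ gives
\[
\dot e_1=-\alpha_1e_1+\kappa_1 g_1(e_2)-\dot x_{d,1},\qquad
\dot e_2=-\alpha_2e_2+\kappa_2 g_2(e_1)-\dot x_{d,2}.
\]
Here $g_1,g_2$ are as in the proof of Theorem~\ref{thm:global_sf}, now with time-dependent base points. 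The mean-value sector bounds $g_1=-\sigma_1e_2$ and $g_2=\sigma_2e_1$, with $\sigma_i\in[0,\lambda/4]$, still hold pointwise in $t$, because the bound $\lambda/4$ on the logistic derivative is uniform in the base point.

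\textbf{Step 2 (perturbed dissipation inequality).} Take $V=c_2e_1^2+c_1e_2^2$ as before. The computation in Theorem~\ref{thm:global_sf} carries over verbatim, so the unperturbed part of $\dot V$ is bounded by $-2\rho_{\mathrm{glob}}V$. The extra term is $-2(c_2e_1\dot x_{d,1}+c_1e_2\dot x_{d,2})$. By $\|\dot{\mathbf{x}}_d\|_\infty\le\nu$ and $|e_1|+|e_2|\le\sqrt2\|\mathbf{e}\|_2$, its magnitude is at most $2\sqrt2\,\nu\max(c_1,c_2)\|\mathbf{e}\|_2$. Using $\|\mathbf{e}\|_2\le\sqrt{V/\min(c_1,c_2)}$, this gives
\[
\dot V\le -2\rho_{\mathrm{glob}}V+2\sqrt2\,\nu\,\frac{\max(c_1,c_2)}{\sqrt{\min(c_1,c_2)}}\sqrt V .
\]

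\textbf{Step 3 (comparison and conversion).} Set $W=\sqrt V$. Wherever $W>0$, we get $\dot W\le-\rho_{\mathrm{glob}}W+\beta$ with $\beta:=\sqrt2\,\nu\max(c_1,c_2)/\sqrt{\min(c_1,c_2)}$. The one technical point is non-differentiability of $W$ at $\mathbf{e}=0$. I would handle it either with $W_\varepsilon=\sqrt{V+\varepsilon}$, which satisfies the same inequality up to an $O(\sqrt\varepsilon)$ term, letting $\varepsilon\to0$; or by noting that the set $\{W\le\beta/\rho_{\mathrm{glob}}\}$ is forward invariant, so only $W>0$ matters.

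The comparison lemma then yields $W(t)\le W(0)e^{-\rho_{\mathrm{glob}}t}+\frac{\beta}{\rho_{\mathrm{glob}}}(1-e^{-\rho_{\mathrm{glob}}t})$, hence $\limsup_{t\to\infty} W\le\beta/\rho_{\mathrm{glob}}$. Dividing by $\sqrt{\min(c_1,c_2)}$ gives
\[
\limsup_{t\to\infty}\|\mathbf{e}\|_2\le\frac{\sqrt2\,\nu}{\rho_{\mathrm{glob}}}\cdot\frac{\max(c_1,c_2)}{\min(c_1,c_2)}.
\]
Since $c_i=\kappa_i\lambda/4$, the ratio $\max(c_1,c_2)/\min(c_1,c_2)$ equals $\max(\kappa_1,\kappa_2)/\min(\kappa_1,\kappa_2)$, which is exactly~\eqref{eq:tracking_bound}.

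\textbf{Remaining claims.} Vanishing as $\nu\to0$ is immediate from the bound. Gain attenuation follows because $\rho_{\mathrm{glob}}$ in~\eqref{eq:rho_glob} is increasing in each $\alpha_i=\gamma_i+K_i$. To see this, differentiate in $\alpha_1$: the derivative is $\tfrac12\bigl[1-(\alpha_1-\alpha_2)/\sqrt{(\alpha_1-\alpha_2)^2+c_1c_2}\bigr]>0$.

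I expect Step~2 to be the main obstacle, specifically keeping the constants tight enough to land on precisely $\sqrt2$ times the $\kappa$-ratio. Everything else is inherited directly from Theorem~\ref{thm:global_sf}.
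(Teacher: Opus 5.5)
Your proposal is correct and follows essentially the same route as the paper: the same error dynamics with the extra $-\dot{\mathbf{x}}_d$ drift and the same $V=c_2e_1^2+c_1e_2^2$. It reaches the same constant $2\sqrt{2}\,\nu\max(c_1,c_2)\|\mathbf{e}\|_2$ (via $|e_1|+|e_2|\le\sqrt{2}\|\mathbf{e}\|_2$, where the paper uses Cauchy--Schwarz with $\|\dot{\mathbf{x}}_d\|_2\le\sqrt{2}\,\nu$) and uses the same comparison argument on $W=\sqrt{V}$; your regularization at $\mathbf{e}=0$ and your explicit check that $\rho_{\mathrm{glob}}$ increases in each $\alpha_i$ usefully fill in two points the paper leaves implicit.
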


\begin{proof}
With $\mathbf{e}=\mathbf{x}-\mathbf{x}_d(t)$, the error dynamics in the
proof of Theorem~\ref{thm:global_sf} acquire the extra term
$-\dot x_{d,i}(t)$. Differentiating $V=c_2 e_1^2+c_1 e_2^2$ and using
$\dot V\le-2\rho_{\mathrm{glob}}V$ for the homogeneous part,
\begin{align*}
\dot V \;&\le\; -2\rho_{\mathrm{glob}}V
      + 2\bigl|c_2 e_1\dot x_{d,1}+c_1 e_2\dot x_{d,2}\bigr|\\
      \;&\le\; -2\rho_{\mathrm{glob}}V
      + 2\max(c_1,c_2)\,\|\mathbf{e}\|_2\,\|\dot{\mathbf{x}}_d\|_2,
\end{align*}
the last step by Cauchy--Schwarz. With
$\|\dot{\mathbf{x}}_d\|_2\le\sqrt2\,\nu$ and
$\|\mathbf{e}\|_2\le\sqrt{V/\min(c_1,c_2)}$, the substitution $W=\sqrt V$
gives the linear inequality $\dot W\le-\rho_{\mathrm{glob}}W
+\sqrt2\,\nu\,\max(c_1,c_2)/\sqrt{\min(c_1,c_2)}$. Its steady state,
divided once more by $\sqrt{\min(c_1,c_2)}$ to return to
$\|\mathbf{e}\|_2$, is the right-hand side of~\eqref{eq:tracking_bound}.
\end{proof}

\begin{remark}[Interpretation, the local certificate, and the $n$-gene case]
\label{rem:global_ngene}
Three points are worth noting. (i)~The closed-loop Jacobian of the
two-gene benchmark has positive determinant and negative trace, so its
\emph{linearisation} is stable for every $K_i\ge 0$.
Theorem~\ref{thm:global_sf} is strictly stronger: local spectral
stability alone does not preclude large-amplitude limit cycles,
additional equilibria, or a non-trivial basin boundary, whereas
condition~\eqref{eq:global_cond} certifies that \emph{every} trajectory
in $\mathbb{R}^2$ converges exponentially to the unique equilibrium
$\mathbf{x}_d$. This large-signal guarantee is the relevant one when the
closed loop must recover from deep-OFF states, or when feedback delays
threaten Hopf-type oscillations~\cite{belgacem2026beyond}.
(ii)~The global Lyapunov function $V=c_2 e_1^2+c_1 e_2^2$ is the
diagonal-weighting analogue of the closed-form \emph{local} Lyapunov
certificate of Proposition~\ref{prop:lyapunov}, with the
setpoint-specific Jacobian couplings replaced by their global suprema
$c_i=\kappa_i\lambda/4$; the sign-cancellation that makes the local
certificate exact is exactly what makes the global one work. The cost is
mild: condition~\eqref{eq:global_cond} constrains only the \emph{product}
$\alpha_1\alpha_2$, and is implied by---but far weaker than---the
setpoint-uniform Gershgorin requirement $\alpha_i>c_i$ for each $i$.
(iii)~Theorem~\ref{thm:global_sf} is dedicated to the two-gene
benchmark; for the general $n$-gene network~\eqref{eq:additive_control} a
coarser global certificate follows from a comparison-principle argument.
Since every logistic factor lies in $(0,1)$,
$|f_i(\mathbf{x})-f_i(\mathbf{x}_d)|\le(\lambda/4)\sum_{j}|e_j|$ over the
regulators $j$ of gene $i$, so the upper Dini derivatives of $|e_i|$ are
dominated by the Metzler matrix $C$ with $C_{ii}=-(\gamma_i+K_i)$ and
$C_{ij}=\kappa_i\lambda/4$. Global exponential stability holds whenever
$C$ is Hurwitz; a convenient sufficient condition is the row
diagonal-dominance bound
\begin{equation}
K_i \;>\; \frac{\kappa_i\lambda\,d_i}{4}-\gamma_i,\qquad
d_i:=\#\{\text{regulators of gene }i\},
\label{eq:global_ngene}
\end{equation}
the \emph{global, nonlinear} counterpart of the local Gershgorin
condition~\eqref{eq:gershgorin_K}.
\end{remark}

\subsection{The logistic structural advantage in additive control}
\label{sec:sf_advantage}

\begin{proposition}[Logistic Advantage in State-Feedback]
\label{prop:sf_advantage}
For the logistic model in Architecture~B, the regulatory production term
is uniformly bounded:
\begin{equation}
0 \;<\; \kappa_i f_i^{\mathrm{log}}(\mathbf{x}_d) \;\leq\; \kappa_i,
\end{equation}
on the entire closed positive orthant $\mathbb{R}_{\geq 0}^n$, so the
feedforward satisfies
\begin{equation}
u_i^{\mathrm{ff}} \;=\; \gamma_i x_{d,i} - \kappa_i f_i^{\mathrm{log}}(\mathbf{x}_d)
\;\in\; \bigl[\gamma_i x_{d,i}-\kappa_i,\;\gamma_i x_{d,i}\bigr]
\end{equation}
for every positive setpoint, including the limit $\mathbf{x}_d \to \mathbf{0}$.
Moreover, for the two-gene activator--repressor case (where each $f_i$
depends on a single coordinate via a single logistic factor),
\begin{equation}
|[J_f]_{ij}| \;=\; \kappa_i \lambda\, f^\pm(x_{d,j})\bigl(1 - f^\pm(x_{d,j})\bigr)
            \;\leq\; \kappa_i \lambda/4,
\label{eq:Jf_log_bound}
\end{equation}
and the off-diagonal Jacobian entries are \emph{strictly positive} on
the entire positive orthant (since $0<f^\pm<1$ there).
For an $n$-gene network with multi-factor $f_i$, an additional product of
$f$-values appears in the chain-rule expansion; since each such factor lies
in $(0,1)$, the bound $\kappa_i\lambda/4$ and the strict positivity are
both preserved.

For the Hill activation model with $n>1$, $h^+(\mathbf{x}_d) \to 0$
as any activating $x_{d,j}\to 0$. Hence the natural regulatory production
$\kappa_i h^+_i$ vanishes, forcing the additive control to
carry the entire degradation load $u_i^{\mathrm{ff}}\to\gamma_i x_{d,i}$
unaided by any regulatory contribution. Simultaneously, the off-diagonal
Jacobian entry
\begin{equation}
[J_f^{\mathrm{Hill}}]_{ij}
   \;=\; \kappa_i\,n \theta^n\, x_{d,j}^{n-1}
                       /(x_{d,j}^n+\theta^n)^2
   \;=\; \Theta\bigl(x_{d,j}^{n-1}\bigr) \;\to\; 0
\label{eq:Jf_hill_decoupling}
\end{equation}
as $x_{d,j}\to 0^+$, so the network \emph{decouples}: the cross-axis
restoring action $[J_f]_{ij}\,e_j$ becomes negligible and only the
diagonal gain $-\gamma_i-K_i$ stabilises the network. In the alternative
output-multiplicative architecture
$\dot{x}_i=\kappa_i u_i f_i(\mathbf{x})-\gamma_i x_i$ in which the input
multiplies the regulatory output, $h^+_i(\mathbf{x})=0$ would
make $u_i$ have \emph{no effect whatsoever} on $\dot{x}_i$---an exact loss
of controllability that the logistic structure prevents because
$f_i^{\mathrm{log}}>0$ everywhere on the positive orthant.
\end{proposition}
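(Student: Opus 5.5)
The proof is a sequence of direct verifications, each resting on the elementary properties in Definition~\ref{def:logistic}. I would treat the logistic claims first and the Hill claims second.

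\textbf{Logistic part.} On the closed orthant $\mathbb{R}_{\ge0}^n$, each factor $f^\pm(x_{d,j};\theta_{ij},\lambda)$ lies strictly in $(0,1)$, because $1+e^{\mp\lambda(x-\theta)}\in(1,\infty)$ for every finite $x$. A finite product of such factors therefore lies in $(0,1)\subset(0,1]$, which gives $0<\kappa_i f_i^{\mathrm{log}}(\mathbf{x}_d)\le\kappa_i$. Subtracting from $\gamma_i x_{d,i}$ yields the interval for $u_i^{\mathrm{ff}}$. Since $f_i^{\mathrm{log}}$ is continuous and $f_i^{\mathrm{log}}(\mathbf{0})>0$ was computed in Section~\ref{sec:logistic-model}, the same bounds persist in the limit $\mathbf{x}_d\to\mathbf{0}$.

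For the Jacobian in the two-gene case, $f_i$ is a single factor. The derivative formula $\tfrac{d}{dx}f^\pm=\pm\lambda f^\pm(1-f^\pm)$ gives $|[J_f]_{ij}|=\kappa_i\lambda f^\pm(1-f^\pm)$. The bound $s(1-s)\le 1/4$, obtained by maximizing the quadratic at $s=1/2$, gives $\le\kappa_i\lambda/4$, and strict positivity follows from $0<f^\pm<1$.

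In the multi-factor case, the product rule gives $\partial f_i/\partial x_j=\pm\lambda f^\pm_{ij}(1-f^\pm_{ij})\prod_{k\ne j}f_{ik}$, where the product runs over the remaining factors of $f_i$. Each remaining factor lies in $(0,1)$, so the magnitude stays strictly positive and can only decrease below $\lambda/4$. This assumes each regulator $j$ appears in only one factor of $f_i$, which is how the product in Section~\ref{sec:logistic-model} is indexed.

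\textbf{Hill part.} With $h^+(x)=x^n/(x^n+\theta^n)$ and $n>1$, continuity gives $h^+(x)\to0$ as $x\to0^+$. The regulatory factor $h_i$ is a product containing this factor times quantities bounded in $[0,1]$, so $\kappa_i h_i\to0$. Consequently $u_i^{\mathrm{ff}}=\gamma_ix_{d,i}-\kappa_ih_i\to\gamma_ix_{d,i}$.

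Differentiating by the quotient rule gives
\[
\frac{d}{dx}h^+=\frac{n x^{n-1}\theta^n}{(x^n+\theta^n)^2},
\]
which is the formula in~\eqref{eq:Jf_hill_decoupling}. As $x\to0^+$ the denominator tends to $\theta^{2n}$, so the entry equals $\kappa_i n\theta^{-n}x^{n-1}(1+o(1))=\Theta(x^{n-1})$, and this tends to $0$ because $n>1$. In the multi-factor case the extra factors are bounded, so the decay rate is preserved. The constant is $\kappa_i n\theta^{-n}$ times the values of the other factors at $\mathbf{x}_d$, which are positive whenever the other regulators are nonzero.

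\textbf{Multiplicative-architecture claim.} In $\dot x_i=\kappa_iu_if_i-\gamma_ix_i$ we have $\partial\dot x_i/\partial u_i=\kappa_if_i$. This derivative vanishes identically on $\{h_i=0\}$, so $u_i$ has no effect on $\dot x_i$ there. In the logistic case the same derivative equals $\kappa_if_i^{\mathrm{log}}>0$, by the first step.

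\textbf{Main obstacle.} None of these steps is deep. The only point needing care is the multi-factor chain-rule claim: the bound $\kappa_i\lambda/4$ must be stated per regulator $j$, not for the whole row, which requires that each regulator enter exactly one factor of $f_i$. I would state that assumption explicitly.
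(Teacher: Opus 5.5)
Your proposal is correct and follows essentially the same route as the paper's proof: direct verification from $f^\pm\in(0,1)$, the bound $f(1-f)\le 1/4$, the derivative formula $\pm\lambda f^\pm(1-f^\pm)$, and the small-$x$ asymptotics $\partial h^+/\partial x\sim n x^{n-1}/\theta^n$, together with details the paper leaves implicit (the product-rule expansion for multi-factor $f_i$, the nonvanishing of the other factors needed for the lower half of $\Theta(\cdot)$, and the computation $\partial\dot x_i/\partial u_i=\kappa_i f_i$ for the multiplicative variant). Your explicit assumption that each regulator enters exactly one factor of $f_i$ is a worthwhile clarification the paper omits: for $f^+(x;\theta_1,\lambda)\,f^-(x;\theta_2,\lambda)$ the two product-rule terms cancel at $x=(\theta_1+\theta_2)/2$, so without that assumption the strict-positivity claim would fail.
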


\begin{proof}
For the logistic factor $f^\pm(x;\theta,\lambda)=1/(1+e^{\mp\lambda(x-\theta)})\in(0,1)$
the bound $f(1-f)\leq 1/4$ is attained at the threshold $x=\theta$ and is
elementary. Equation~\eqref{eq:Jf_log_bound} follows by chain rule
$\partial f^\pm/\partial x = \pm \lambda f^\pm(1-f^\pm)$. For the Hill
factor $h^+(x)=x^n/(x^n+\theta^n)$ with $n>1$, direct calculation gives
$h^+(x)\sim (x/\theta)^n\to 0$ and
$\partial h^+/\partial x = n\theta^n x^{n-1}/(x^n+\theta^n)^2\sim
n x^{n-1}/\theta^n \to 0$ as $x\to 0^+$, since the exponent $n-1>0$.
Equation~\eqref{eq:Jf_hill_decoupling} is direct differentiation.
\end{proof}

\begin{remark}[Output-multiplicative control and the logistic advantage]
The logistic basal production $f_i^{\mathrm{log}}>0$ keeps every positive
setpoint feasible even when the activator is absent. In the
output-multiplicative variant $\dot{x}_i=\kappa_i u_i f_i(\mathbf{x})
-\gamma_i x_i$, driving a Hill-activated gene away from an activator-absent
state is infeasible because $f_i^{\mathrm{Hill}}=0$ leaves $u_i$ without
authority, whereas the logistic $f_i^{\mathrm{log}}>0$ retains
controllability throughout the closed positive orthant
$\mathbb{R}_{\geq 0}^n$. In the additive Architecture~B the Hill control
input remains effective, but the feedforward must then supply the full
degradation flux unaided. In the argument-modulating Architecture~A the
input acts through $u_{ij}x_j$ and is therefore effective only on the open
orthant $\mathbb{R}_{>0}^n$ for \emph{both} regulatory functions. The
logistic model thus combines controllability in the output-multiplicative
variant with bounded, basal-pre-loaded actuation in Architecture~B.
\end{remark}

\section{Closed-Form Lyapunov Certificates}
\label{sec:lyapunov}

Theorem~\ref{thm:sf_stability} establishes asymptotic stability of the
closed loop but does not by itself yield a \emph{transient performance
certificate}---an explicit settling-time bound or a robustness margin to
disturbances. We now supply such certificates via a quadratic Lyapunov
function that, remarkably, has a closed-form expression in the very
entries $A,B$ of the closed-loop Jacobian $J_f-\Gamma-K$. The result is
purely structural: it applies to \emph{any} $2\times 2$ matrix sharing
the sign-pattern of the activator--repressor closed-loop Jacobian
(negative diagonal, off-diagonals of opposite sign), with positive
parameters $\alpha,\beta,A,B$. It therefore covers the
state-feedback closed loop developed in
Section~\ref{sec:state_feedback}, and indeed any feedback architecture
whose closed loop has this sign structure, differing only in the values
of $A$, $B$, $\alpha$, $\beta$.

\subsection{Closed-form Lyapunov function}
\label{sec:lyap_function}

\begin{proposition}[Closed-form Lyapunov function for the closed-loop structural class]
\label{prop:lyapunov}
Let $\alpha, \beta, A, B > 0$ and consider any matrix of structural form
\begin{equation}
M \;=\; \begin{bmatrix} -\alpha & -A \\ B & -\beta \end{bmatrix}.
\label{eq:M_structure}
\end{equation}
Then the weighted quadratic function
\begin{equation}
V(\mathbf{e}) \;=\; B\,e_1^2 + A\,e_2^2 \;=\; \mathbf{e}^{T} P\, \mathbf{e},
\qquad P = \operatorname{diag}(B, A),
\label{eq:V_def}
\end{equation}
is a strict Lyapunov function for the linear system $\dot{\mathbf{e}} = M\mathbf{e}$:
\begin{align}
M^{T} P + P M &\;=\; \operatorname{diag}(-2\alpha B,\; -2\beta A),
\label{eq:Vdot_bound}\\
\dot{V} \;=\; -2\alpha B\,e_1^2 - 2\beta A\,e_2^2 &\;\leq\; -2\,\rho\, V,
\nonumber
\end{align}
where $\rho := \min(\alpha,\beta)$. Consequently,
\begin{align}
V(\mathbf{e}(t)) &\;\leq\; V(\mathbf{e}(0))\,e^{-2\rho\,t},\qquad t\geq 0,
\label{eq:V_decay}\\
\|\mathbf{e}(t)\|_{2} &\;\leq\; \sqrt{\frac{\max(A,B)}{\min(A,B)}}\;\|\mathbf{e}(0)\|_{2}\;e^{-\rho\,t}.
\label{eq:norm_decay}
\end{align}
\end{proposition}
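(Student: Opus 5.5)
The plan is a direct computation of the Lyapunov matrix product, followed by the standard comparison argument.

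\emph{Step 1 (the identity).} With $P=\operatorname{diag}(B,A)$ we have
\[
PM=\begin{bmatrix} -\alpha B & -AB\\ AB & -\beta A\end{bmatrix}.
\]
Since $M^{T}P=(PM)^{T}$ because $P$ is symmetric, the sum $M^{T}P+PM$ has off-diagonal entries $-AB+AB=0$. Its diagonal entries are $-2\alpha B$ and $-2\beta A$, which gives \eqref{eq:Vdot_bound}. This exact cancellation is the whole point of the weighting. The weight $B$ on $e_1^2$ and $A$ on $e_2^2$ is chosen precisely so that the cross terms $-2AB\,e_1e_2$ and $+2AB\,e_1e_2$ from the two rows annihilate. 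The opposite signs of the off-diagonals of $M$ are exactly what is needed here.

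\emph{Step 2 (decay of $V$).} Along $\dot{\mathbf e}=M\mathbf e$ we have $\dot V=\mathbf e^{T}(M^{T}P+PM)\mathbf e=-2\alpha Be_1^2-2\beta Ae_2^2$. Bounding $\alpha,\beta\ge\rho$ termwise gives $\dot V\le-2\rho(Be_1^2+Ae_2^2)=-2\rho V$. Since $V$ is positive definite ($A,B>0$), it is a strict Lyapunov function. Then $\frac{d}{dt}\bigl(e^{2\rho t}V\bigr)\le0$, which is the Gr\"onwall step, and this yields \eqref{eq:V_decay}.

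\emph{Step 3 (norm bound).} We use the sandwich $\min(A,B)\|\mathbf e\|_2^2\le V\le\max(A,B)\|\mathbf e\|_2^2$. Combining it with \eqref{eq:V_decay} gives
\[
\min(A,B)\|\mathbf e(t)\|_2^2\le\max(A,B)\|\mathbf e(0)\|_2^2e^{-2\rho t}.
\]
Taking square roots gives \eqref{eq:norm_decay}.

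There is no genuine obstacle; the only point requiring care is Step 1, namely checking that the chosen weighting produces exact cancellation rather than mere domination of the cross term.
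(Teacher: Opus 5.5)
Your proposal is correct and follows essentially the same route as the paper. It computes $PM$ (with $M^{T}P=(PM)^{T}$), observes that the off-diagonal entries cancel exactly, bounds $\dot V\le-2\rho V$ termwise, applies Gr\"onwall, and finishes with the sandwich $\min(A,B)\|\mathbf e\|_2^2\le V\le\max(A,B)\|\mathbf e\|_2^2$.
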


\begin{proof}
Direct computation:
$M^T P = \bigl[\begin{smallmatrix}-\alpha B & AB\\ -AB & -\beta A\end{smallmatrix}\bigr]$
and
$P M = \bigl[\begin{smallmatrix}-\alpha B & -AB\\ AB & -\beta A\end{smallmatrix}\bigr]$,
so the off-diagonal entries of $M^T P + PM$ cancel exactly, yielding
$M^T P + PM = \operatorname{diag}(-2\alpha B, -2\beta A)$. Hence
$\dot{V} = \mathbf{e}^T(M^T P + PM)\mathbf{e} = -2\alpha B e_1^2 - 2\beta A e_2^2$,
and since $\alpha B e_1^2 + \beta A e_2^2 \geq \min(\alpha,\beta)(B e_1^2 + A e_2^2) = \rho V$,
the bound $\dot V \leq -2\rho V$ follows. Inequality~\eqref{eq:V_decay} is
Gr\"onwall's lemma applied to $\dot{V}\leq -2\rho V$. For~\eqref{eq:norm_decay},
note $\min(A,B)\|\mathbf{e}\|_2^2 \leq V(\mathbf{e}) \leq \max(A,B)\|\mathbf{e}\|_2^2$;
combining the upper bound at $t=0$ with the lower bound at $t$
and~\eqref{eq:V_decay} yields the stated inequality.
\end{proof}

\begin{remark}[Tightness and geometric interpretation]
\label{rem:lyap_tight}
Four observations clarify the result.
(i)~When $\alpha=\beta$ (matched degradation rates),
$\alpha B e_1^2 + \beta A e_2^2 = \alpha(B e_1^2 + A e_2^2) = \alpha V$
\emph{exactly}, so $\dot V = -2\alpha V$ is an equality, not just a bound.
(ii)~When additionally $A=B$, the prefactor $\sqrt{\max(A,B)/\min(A,B)}$
in~\eqref{eq:norm_decay} equals $1$ and the Euclidean norm contracts
strictly monotonically. For $A\neq B$, the sub-level sets of $V$ are
ellipses with semi-axes $\sqrt{V/B}$ along $e_1$ and $\sqrt{V/A}$ along
$e_2$, so the Euclidean norm $\|\mathbf{e}\|$ may transiently exceed its
initial value by up to the factor $\sqrt{\max(A,B)/\min(A,B)}$ even while
$V$ decays monotonically: this is the price of using a weighted Lyapunov
function to bound an unweighted norm.
(iii)~The Lyapunov matrix $P = \operatorname{diag}(B, A)$ is not the only
positive-definite choice that certifies stability: the family
$\{c\,\operatorname{diag}(B,A): c>0\}$ is the entire set of positive diagonal
$P$ for which $M^T P + PM$ is itself diagonal (the off-diagonal
$B p_2 - A p_1 = 0$ forces $p_1/p_2 = B/A$). The specific normalisation
$P=\operatorname{diag}(B,A)$ has no special role beyond convenience;
diagonality is what makes $P$ explicit in $A,B$ without solving a full
algebraic Lyapunov equation.
(iv)~The Lyapunov rate $\rho=\min(\alpha,\beta)$ is in general
conservative relative to the true spectral decay rate of $M$. The
eigenvalues of~\eqref{eq:M_structure} are
$\tfrac12\bigl[-(\alpha+\beta)\pm\sqrt{(\alpha-\beta)^2-4AB}\,\bigr]$;
when $(\alpha-\beta)^2<4AB$ they form a complex-conjugate pair with real
part $-(\alpha+\beta)/2$, so the exact decay rate is
$(\alpha+\beta)/2\ge\min(\alpha,\beta)$, the two coinciding precisely
when $\alpha=\beta$ (cf.\ point~(i)). The certificate therefore loses at
most $|\alpha-\beta|/2$ in rate---nothing when the closed-loop decay
rates are matched.
\end{remark}

\subsection{Explicit settling-time bound}
\label{sec:lyap_settling}

\begin{corollary}[Explicit settling-time bound]
\label{cor:settling}
Under the conditions of Proposition~\ref{prop:lyapunov}, for any tolerance
$\delta\in (0,1)$, the time required for the state to satisfy
$\|\mathbf{e}(t)\|_2 \leq \delta\,\|\mathbf{e}(0)\|_2$ obeys
\begin{equation}
t_{s,\delta} \;\leq\; \frac{1}{\rho}\left[\ln(1/\delta) + \tfrac{1}{2}\ln\!\frac{\max(A,B)}{\min(A,B)}\right].
\label{eq:settling_bound}
\end{equation}
The bound is dominated by the slowest physical mode $1/\rho$; the
coupling-condition-number correction
$\tfrac{1}{2}\ln(\max(A,B)/\min(A,B))$ is small when $A$ and $B$ are
comparable.
\end{corollary}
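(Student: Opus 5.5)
The plan is to read the settling-time bound off the norm-decay estimate~\eqref{eq:norm_decay} of Proposition~\ref{prop:lyapunov}, which already gives an exponential envelope for $\|\mathbf{e}(t)\|_2$ relative to $\|\mathbf{e}(0)\|_2$. I interpret $t_{s,\delta}$ as the first time after which the relative error stays below $\delta$, that is, $\inf\{T\ge 0: \|\mathbf{e}(t)\|_2\le\delta\|\mathbf{e}(0)\|_2 \ \forall t\ge T\}$. I treat the trivial case $\mathbf{e}(0)=\mathbf{0}$ separately: there $t_{s,\delta}=0$ and nothing needs proving. It therefore suffices to exhibit one explicit time $T^\ast$ beyond which the envelope is at most $\delta\|\mathbf{e}(0)\|_2$.

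First, set $\kappa_P:=\max(A,B)/\min(A,B)\ge 1$. Then~\eqref{eq:norm_decay} gives $\|\mathbf{e}(t)\|_2\le\sqrt{\kappa_P}\,e^{-\rho t}\|\mathbf{e}(0)\|_2$. The envelope $t\mapsto\sqrt{\kappa_P}\,e^{-\rho t}$ is strictly decreasing because $\rho=\min(\alpha,\beta)>0$. Hence once it drops to $\delta$, it stays at or below $\delta$ for all later times.

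Second, I solve $\sqrt{\kappa_P}\,e^{-\rho T^\ast}=\delta$ by taking logarithms. This gives $\rho T^\ast=\ln(1/\delta)+\tfrac12\ln\kappa_P$, so $T^\ast$ equals the right-hand side of~\eqref{eq:settling_bound}. Since $\delta<1$ and $\kappa_P\ge1$, we have $T^\ast>0$. For every $t\ge T^\ast$ the error satisfies $\|\mathbf{e}(t)\|_2\le\delta\|\mathbf{e}(0)\|_2$, so $t_{s,\delta}\le T^\ast$.

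Finally, I justify the interpretive remark. The correction term $\tfrac12\ln\kappa_P$ vanishes when $A=B$ and is $O(|A-B|/\min(A,B))$ when $A$ and $B$ are comparable, so the leading term is $\rho^{-1}\ln(1/\delta)$. I expect no real obstacle here. The only point needing care is the monotonicity of the envelope: it is what lets a single crossing time bound the settling time, even though $\|\mathbf{e}(t)\|_2$ itself need not be monotone (Remark~\ref{rem:lyap_tight}(ii)).
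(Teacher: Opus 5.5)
Your proposal is correct and follows the paper's proof: both read the bound off the envelope $\sqrt{\max(A,B)/\min(A,B)}\,e^{-\rho t}$ in \eqref{eq:norm_decay} and solve $\sqrt{\max(A,B)/\min(A,B)}\,e^{-\rho t}\le\delta$ for $t$. Your extra remarks are small refinements the paper leaves implicit: the envelope is monotone, so the tolerance holds for all $t\ge T^\ast$ and not just at one instant, and the case $\mathbf{e}(0)=\mathbf{0}$ is trivial.
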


\begin{proof}
By~\eqref{eq:norm_decay}, the inequality
$\|\mathbf{e}(t)\|_2\leq\delta\|\mathbf{e}(0)\|_2$ is implied by
\[
\sqrt{\max(A,B)/\min(A,B)}\;e^{-\rho t}\;\leq\;\delta,
\]
which rearranges to
\[
\rho t \geq \ln(1/\delta) + \tfrac{1}{2}\ln\bigl(\max(A,B)/\min(A,B)\bigr).
\]
Dividing by $\rho>0$ yields~\eqref{eq:settling_bound}.
\end{proof}

\begin{remark}[Numerical verification of Corollary~\ref{cor:settling} for state-feedback]
\label{rem:lyapunov_numerical}
For the two-gene logistic activator--repressor network under the state-feedback
law of Section~\ref{sec:state_feedback} at setpoint $(x_{d,1},x_{d,2})=(0.8,0.6)$
with $\kappa_i=1$, $\gamma_i=0.5$, $\theta_i=0.5$, $\lambda=5$ and proportional
gains $K_1=K_2=1.0$, the closed-loop Jacobian
$J_{\mathrm{cl}}=J_f-\Gamma-K$ has the structural form~\eqref{eq:M_structure} with
$\alpha=\beta=\gamma+K=1.5$, $A=\kappa_1\lambda f^-(1-f^-)|_{x_{d,2}}\approx 1.175$,
and $B=\kappa_2\lambda f^+(1-f^+)|_{x_{d,1}}\approx 0.746$, so $\rho=
\min(\alpha,\beta)=1.5$. With tolerance $\delta=0.05$, the analytical bound on
the relative-to-initial settling time is
\[
\begin{aligned}
t_{s,5\%}
&\;\leq\;\frac{\ln 20 + \tfrac{1}{2}\ln(1.175/0.746)}{1.5}\\
&\;\approx\;\frac{2.996+0.227}{1.5}\;\approx\;2.15\ \mathrm{a.u.}
\end{aligned}
\]
Direct ODE simulation of the linearised closed-loop
$\dot{\mathbf{e}}=J_{\mathrm{cl}}\mathbf{e}$ from initial conditions in the
positive orthant yields empirical relative-to-initial $5\%$-settling times in
the range $1.85$--$2.13\,\mathrm{a.u.}$, depending on the alignment of
$\mathbf{e}(0)$ with the Lyapunov weighting. The conditioning prefactor
$\sqrt{\max(A,B)/\min(A,B)}$ in~\eqref{eq:norm_decay} is realised when
$\mathbf{e}(0)$ lies along the \emph{heavier}-weighted axis (here $e_2$, weight
$A$), so the bound is tight to within $1\%$ for such initial conditions and
conservative by up to $\approx 16\%$ for initial conditions aligned with the
\emph{lighter}-weighted axis ($e_1$, weight $B$). The simulated full-nonlinear
$5\%$-of-setpoint settling
times reported in Section~\ref{sec:numerical} ($t_{s,1}\approx 2.18$,
$t_{s,2}\approx 1.88$ from $\mathbf{x}(0)=(0.1,0.1)$) are consistent with the
linearisation prediction, the deviation being attributable to higher-order
terms in the nonlinear dynamics.
\end{remark}

\subsection{Input-to-state stability and parametric robustness}
\label{sec:lyap_iss}

The same Lyapunov function~\eqref{eq:V_def} certifies an
input-to-state-stability (ISS) bound under bounded disturbances:

\begin{proposition}[ISS bound under additive disturbances]
\label{prop:iss}
Consider the perturbed dynamics $\dot{\mathbf{e}} = M\mathbf{e} + \boldsymbol{\eta}(t)$,
with $M$ as in~\eqref{eq:M_structure} and $\|\boldsymbol{\eta}(t)\|_2\leq D$ for
all $t\geq 0$. Then $V$ from~\eqref{eq:V_def} satisfies
\begin{equation}
\dot{V} \;\leq\; -2\rho\,V + 2\max(A,B)\,D\,\sqrt{V/\min(A,B)},
\end{equation}
and every trajectory converges \emph{exponentially with rate $\rho$ on
$W=\sqrt{V}$} to the ultimate-bound set
\begin{equation}
\Omega_D \;=\; \Bigl\{\mathbf{e}:\, \|\mathbf{e}\|_2 \;\leq\; \frac{\max(A,B)}{\min(A,B)}\cdot\frac{D}{\rho}\Bigr\}.
\label{eq:iss_bound}
\end{equation}
\end{proposition}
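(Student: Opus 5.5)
We differentiate $V(\mathbf{e})=\mathbf{e}^TP\mathbf{e}$ along the perturbed dynamics $\dot{\mathbf{e}}=M\mathbf{e}+\boldsymbol{\eta}$. The homogeneous part is handled exactly as in Proposition~\ref{prop:lyapunov}, which leaves a disturbance cross-term. Differentiating gives
\[
\dot V=\mathbf{e}^T(M^TP+PM)\mathbf{e}+2\mathbf{e}^TP\boldsymbol{\eta}.
\]
By~\eqref{eq:Vdot_bound} the first term is at most $-2\rho V$. For the cross-term, Cauchy--Schwarz together with $\|P\|_2=\max(A,B)$ gives
\[
2\mathbf{e}^TP\boldsymbol{\eta}\le 2\max(A,B)\,\|\mathbf{e}\|_2\,D .
\]
The lower sandwich bound $\min(A,B)\|\mathbf{e}\|_2^2\le V$ then yields $\|\mathbf{e}\|_2\le\sqrt{V/\min(A,B)}$. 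Substituting this gives the displayed differential inequality for $\dot V$.

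Next we pass to $W=\sqrt V$ so that the inequality becomes linear. Wherever $V>0$ we have $\dot W=\dot V/(2W)$, hence
\[
\dot W\le-\rho W+\frac{\max(A,B)\,D}{\sqrt{\min(A,B)}}.
\]
This step is the main (mild) obstacle, since $W$ is not differentiable at $\mathbf{e}=0$. We handle it in one of two ways. The first option is to work with the upper Dini derivative $D^+W$; at $W=0$ we have $D^+W\le\|P^{1/2}\boldsymbol{\eta}\|\le\sqrt{\max(A,B)}\,D$, which is no larger than the constant term above, so the inequality still holds there. The second option is to apply the argument to $W_\epsilon=\sqrt{V+\epsilon}$ and let $\epsilon\to0$.

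The comparison lemma (linear Gr\"onwall) then gives
\[
W(t)\le e^{-\rho t}W(0)+\bigl(1-e^{-\rho t}\bigr)W_\infty,\qquad W_\infty:=\frac{\max(A,B)\,D}{\rho\sqrt{\min(A,B)}} .
\]
So $W$ converges exponentially, at rate $\rho$, to the interval $[0,W_\infty]$.

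Finally we convert back to the Euclidean norm. Using $\|\mathbf{e}\|_2\le W/\sqrt{\min(A,B)}$ we get
\[
\limsup_{t\to\infty}\|\mathbf{e}(t)\|_2\le\frac{\max(A,B)}{\min(A,B)}\cdot\frac{D}{\rho}.
\]
The same estimate shows that the distance of $\mathbf{e}(t)$ to $\Omega_D$ decays like $e^{-\rho t}$. The set $\{W\le W_\infty\}$ is forward invariant and contained in $\Omega_D$, which establishes~\eqref{eq:iss_bound}. This mirrors exactly the $W=\sqrt V$ argument already used in the proof of Corollary~\ref{cor:tracking}, with $\sqrt2\,\nu$ replaced by $D$ and $(c_1,c_2)$ replaced by $(A,B)$.
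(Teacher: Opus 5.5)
Your proposal is correct and follows the paper's proof essentially step for step. It uses the same Cauchy--Schwarz bound on the cross-term $2\mathbf{e}^{T}P\boldsymbol{\eta}$ via $\|P\mathbf{e}\|_2\le\max(A,B)\|\mathbf{e}\|_2$, the same sandwich inequality, the same passage to $W=\sqrt{V}$ with constant $\max(A,B)D/\sqrt{\min(A,B)}$, and the same conversion back to $\|\mathbf{e}\|_2$. Your handling of the non-differentiability of $W$ at $\mathbf{e}=0$ (Dini derivative or $\sqrt{V+\epsilon}$ regularisation), the explicit comparison solution, and the forward invariance of $\{W\le W_\infty\}$ add rigour that the paper's proof leaves implicit.
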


\begin{proof}
Direct computation gives
\begin{align*}
\dot{V} \;&=\; 2\mathbf{e}^T P \dot{\mathbf{e}}\\
       \;&=\; -2\alpha B e_1^2 - 2\beta A e_2^2 + 2(B e_1 \eta_1 + A e_2 \eta_2)\\
       \;&\leq\; -2\rho V + 2\sqrt{B^2 e_1^2 + A^2 e_2^2}\,\|\boldsymbol{\eta}\|_2\\
       \;&\leq\; -2\rho V + 2\max(A,B)\,D\,\|\mathbf{e}\|_2.
\end{align*}
Using $\|\mathbf{e}\|_2 \leq \sqrt{V/\min(A,B)}$ gives the displayed bound.
Setting $W = \sqrt{V}$ turns this into the linear differential inequality
$\dot{W} \leq -\rho W + c$ with constant
$c:=\max(A,B)\,D/\sqrt{\min(A,B)}$, whose steady state is $W_\infty=c/\rho$.
Since $\|\mathbf{e}\|_2\le W/\sqrt{\min(A,B)}$, this is equivalent to
\[
\limsup_{t\to\infty}\|\mathbf{e}(t)\|_2 \;\leq\;
\frac{W_\infty}{\sqrt{\min(A,B)}}
\;=\; \frac{\max(A,B)\,D}{\rho\,\min(A,B)}.
\]
\end{proof}

\begin{remark}[Application to state-feedback]
\label{rem:lyapunov_sf}
Proposition~\ref{prop:lyapunov} applies verbatim to the state-feedback
closed-loop Jacobian of Theorem~\ref{thm:sf_stability}. For the two-gene
activator--repressor network without self-regulation
($[J_f]_{ii}=0$), the matrix $J_f - \Gamma - K$ has structural
form~\eqref{eq:M_structure} with
$\alpha=\gamma_1+K_1$, $\beta=\gamma_2+K_2$, $A=|[J_f]_{12}|$,
$B=|[J_f]_{21}|$. The convergence rate is therefore
$\rho = \min(\gamma_i+K_i)$, which can be increased arbitrarily by raising
the feedback gain $K_i$---a gain-rate trade-off unavailable to control
schemes whose effective decay rate is intrinsically bounded by the
open-loop degradation. By Proposition~\ref{prop:iss}, the
disturbance-rejection ultimate bound for state-feedback shrinks as
$D/\min_i(\gamma_i+K_i)$, providing an explicit design knob for noise
attenuation in noisy biological environments.

A noteworthy consequence: for this structural class, $\det(M) =
\alpha\beta+AB>0$ and $\mathrm{trace}(M)=-(\alpha+\beta)<0$, so Routh--Hurwitz
already certifies stability of the linearisation for \emph{every}
$K_i\geq 0$. In particular, the Gershgorin
condition~\eqref{eq:gershgorin_K} of Theorem~\ref{thm:sf_stability},
while a valid sufficient condition for general $n$-dimensional networks,
is conservative for the two-gene activator--repressor network: even
$K_i=0$ (open loop) gives a \emph{locally} asymptotically stable
linearisation, with local rate $\rho=\min(\gamma_i)$. Note that this is
only a \emph{local} guarantee: at $K_i=0$ the global
condition~\eqref{eq:global_cond} of Theorem~\ref{thm:global_sf} need not
hold (for the benchmark parameters of Section~\ref{sec:numerical} it does
not), so global exponential stability of the nonlinear closed loop is
not certified at $K_i=0$.
Proposition~\ref{prop:lyapunov} provides the explicit Lyapunov function
underlying the local observation. The Gershgorin gain bound remains
useful in higher dimensions, where the simple $2\times 2$
sign-cancellation argument no longer applies.
\end{remark}

\begin{remark}[Robustness to parametric model mismatch]
\label{rem:robustness}
Proposition~\ref{prop:iss} also quantifies robustness to errors in the
plant parameters. Suppose the feedforward $u_i^{\mathrm{ff}}$
in~\eqref{eq:sf_law} is computed from nominal parameters
$(\kappa_i,\gamma_i,\lambda)$, while the true plant runs on perturbed
values $(\hat\kappa_i,\hat\gamma_i,\hat\lambda)$. The mismatch enters
the error dynamics as an additive perturbation
$\eta_i(t)=\bigl(\hat\kappa_i\hat f_i(\mathbf{x})-\kappa_i f_i(\mathbf{x})\bigr)
-(\hat\gamma_i-\gamma_i)x_i$. On any bounded operating region $\|\mathbf{x}\|\le R$,
$\|\boldsymbol{\eta}\|_2\le D(R)$ for a finite $D(R)$ that vanishes as the
parameter mismatch tends to zero, since $\hat f_i,f_i\in(0,1]$ and the
linear term is bounded on $\{\|\mathbf{x}\|\le R\}$.
Proposition~\ref{prop:iss} then bounds the resulting steady-state
setpoint offset by
$\limsup_{t\to\infty}\|\mathbf{x}(t)-\mathbf{x}_d\|_2\le
\tfrac{\max(A,B)}{\min(A,B)}\,D(R)/\rho$, with $\rho=\min_i(\gamma_i+K_i)$.
For sufficiently small mismatch the perturbed trajectories remain in such
a bounded operating region (standard ISS bootstrap), and the designer
shrinks the offset by raising the feedback gain. The proportional term
thus converts an un-modelled parametric error into a bounded,
gain-attenuable tracking offset: exact knowledge of
$(\kappa,\gamma,\lambda)$ is not required for practical setpoint
regulation, only for exact ($D=0$) cancellation.
\end{remark}

\section[Feedback Monostabilization of Bistable Switches]{Feedback Monostabilization\texorpdfstring{\\}{ }%
  of Scalar Bistable Switches}
\label{sec:monostab}

The state-feedback design of Section~\ref{sec:state_feedback} targets
the smooth multi-gene oscillator. We now turn to a complementary
scalar problem: \emph{suppressing} bistability in a single logistic
self-activation node by proportional feedback. A canonical bistability
theorem for the logistic self-activation
\begin{equation}
\dot x(t) \;=\; \kappa\,\fplus(x;\theta,\lambda) \;-\; \gamma\,x,
\qquad x\ge 0,
\label{eq:self_activation_scalar}
\end{equation}
proved in the companion paper~\cite{belgacem2026extensions}, states
that in the dimensionless plane
$(\eta,\phi)=(\kappa\lambda/\gamma,\lambda\theta)$ bistability holds
exactly inside the cusp region $\eta>4,\,\phi_{-}(\eta)<\phi<\phi_{+}(\eta)$
with explicit saddle-node curves $\phi_{\pm}(\eta)$ meeting at
$(\eta,\phi)=(4,2)$. Closing the loop with a proportional law
$u(t)=-K(x(t)-x^{d})$ steering toward a desired set-point $x^{d}\ge 0$
yields the controlled scalar equation
\begin{equation}
\dot x(t) \;=\; \kappa\,\fplus(x(t);\theta,\lambda)
              \;-\;(\gamma+K)\,x(t)\;+\;K x^{d}.
\label{eq:bistable_pfb}
\end{equation}
The effective decay rate becomes $\gamma+K$, the effective forcing
acquires a constant $Kx^{d}$, and the resulting closed loop is itself
a logistic self-activation with input. The next theorem gives the
sharp gain threshold above which the closed loop is monostable for
every choice of set-point and threshold.

\begin{theorem}[Feedback Monostabilization]
\label{thm:monostab}
Suppose the open-loop system~\eqref{eq:self_activation_scalar} is in the
bistable regime $\eta=\kappa\lambda/\gamma>4$ and
$\phi_{-}(\eta)<\phi<\phi_{+}(\eta)$. Apply proportional feedback
$u(t)=-K(x(t)-x^{d})$ with $K\ge 0$ and $x^{d}\ge 0$, and define the
closed-loop dimensionless amplification
\[
\eta_K \;:=\; \frac{\kappa\lambda}{\gamma+K}.
\]
\begin{enumerate}
\item[\textup{(i)}] If $K\ge K^{*}:=\dfrac{\kappa\lambda}{4}-\gamma$,
      then $\eta_K\le 4$ and~\eqref{eq:bistable_pfb} is
      \emph{monostable}: it has a unique equilibrium
      $x_K^{*}\in\bigl(Kx^{d}/(\gamma+K),\;(Kx^{d}+\kappa)/(\gamma+K)\bigr)$,
      globally asymptotically stable on $[0,\infty)$. If moreover
      $K>K^{*}$, the convergence is globally exponential with rate
      at least $\gamma+K-\kappa\lambda/4>0$.
\item[\textup{(ii)}] If $K<K^{*}$, the closed loop may remain
      bistable; the bistability persists iff
      $(\eta_K,\tilde\phi_{K})$ with
      $\tilde\phi_{K}:=\lambda\bigl(\theta-Kx^{d}/(\gamma+K)\bigr)$
      satisfies
      $\phi_{-}(\eta_K)<\tilde\phi_{K}<\phi_{+}(\eta_K)$.
\item[\textup{(iii)}] Hence $K=K^{*}=\kappa\lambda/4-\gamma$ is the
      \emph{minimal} gain that guarantees monostability irrespective
      of $x^{d}$ and $\theta$.
\end{enumerate}
\end{theorem}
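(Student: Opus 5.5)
The plan is to work directly with the scalar closed-loop vector field $F_K(x):=\kappa\fplus(x;\theta,\lambda)-(\gamma+K)x+Kx^{d}$ and exploit the universal derivative bound $0<\tfrac{d}{dx}\fplus\le\lambda/4$ from Definition~\ref{def:logistic}.

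\textbf{Part (i): existence, uniqueness and stability.} First, $F_K'(x)=\kappa\lambda\fplus(1-\fplus)-(\gamma+K)\le\kappa\lambda/4-(\gamma+K)$, which is $\le 0$ exactly when $K\ge K^{*}$, i.e.\ $\eta_K\le 4$. Moreover $F_K'$ vanishes at most at the single point $x=\theta$, because $\fplus(1-\fplus)=1/4$ only there. Hence $F_K$ is strictly decreasing on $\mathbb{R}$.

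For existence and localization, note that since $0<\fplus<1$,
\[
F_K\bigl(Kx^{d}/(\gamma+K)\bigr)=\kappa\fplus>0,\qquad F_K\bigl((Kx^{d}+\kappa)/(\gamma+K)\bigr)=\kappa(\fplus-1)<0 .
\]
The intermediate value theorem then gives a root in the open interval stated, and strict monotonicity makes it unique on $\mathbb{R}$, hence on $[0,\infty)$. Note also $F_K(0)=\kappa\fplus(0)+Kx^{d}>0$, so $[0,\infty)$ is forward invariant.

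Global asymptotic stability follows from the scalar sign structure: $F_K>0$ to the left of $x_K^{*}$ and $F_K<0$ to the right, so $V=(x-x_K^{*})^2$ satisfies $\dot V=2(x-x_K^{*})F_K(x)<0$ off $x_K^{*}$. Alternatively, one can note that trajectories of a scalar ODE are monotone and bounded, so they converge to an equilibrium.

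For $K>K^{*}$, write $e=x-x_K^{*}$ and apply the mean-value theorem: $F_K(x)=F_K'(\xi)e$ with $F_K'(\xi)\le-(\gamma+K-\kappa\lambda/4)$. Then $\tfrac{d}{dt}|e|\le-(\gamma+K-\kappa\lambda/4)|e|$, and Gr\"onwall gives the exponential rate.

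\textbf{Part (ii): reduction to the open-loop bistability theorem.} The idea is to recast \eqref{eq:bistable_pfb} in the form of \eqref{eq:self_activation_scalar}. Substitute $y=x-s$ with $s:=Kx^{d}/(\gamma+K)$. Then $\dot y=\kappa\fplus(y;\theta-s,\lambda)-(\gamma+K)y$, which is a logistic self-activation with decay $\gamma+K$ and shifted threshold $\theta-s$. Its dimensionless coordinates are $(\eta_K,\tilde\phi_K)$.

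We then invoke the cusp characterization from \cite{belgacem2026extensions}: the number of equilibria is three iff $\phi_-(\eta_K)<\tilde\phi_K<\phi_+(\eta_K)$. Two points need care here. First, the shifted threshold may be nonpositive. Second, the domain shifts to $y\ge -s$. I expect this to be the main obstacle. I would argue that the equilibria of the translated equation all lie in $x>s\ge0$ (by the localization computed above), so the domain restriction is harmless. I would also check that the companion theorem's saddle-node curves are derived from the algebraic tangency conditions $F=0$, $F'=0$, which remain valid for any real $\phi$. If the cited theorem assumes $\theta>0$, I would re-derive the tangency parametrization briefly.

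\textbf{Part (iii): minimality.} Sufficiency is part (i). For necessity, take any $K<K^{*}$, so that $\eta_K>4$. Choose $x^{d}=0$, or more generally choose $\theta$, so that $\tilde\phi_K$ lies in the nonempty interval $(\phi_-(\eta_K),\phi_+(\eta_K))$; this interval is nonempty for $\eta_K>4$ by the cusp geometry. By (ii) this yields a bistable closed loop, so no smaller gain works uniformly in $x^{d}$ and $\theta$.
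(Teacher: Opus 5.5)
Your proposal is correct and follows essentially the same route as the paper: strict monotonicity of the closed-loop vector field from the bound $\kappa\lambda/4-(\gamma+K)\le 0$ (with equality only at $x=\theta$), plus Gr\"onwall for the rate in (i); the translation $y=x-Kx^{d}/(\gamma+K)$ with $\fplus(y+c;\theta,\lambda)=\fplus(y;\theta-c,\lambda)$ to reduce (ii) to the companion cusp theorem; and the choice $x^{d}=0$, $\lambda\theta\in(\phi_{-}(\eta_K),\phi_{+}(\eta_K))$ for necessity in (iii). The extra points you check are details the paper passes over, and your treatment of them is sound: locating the root by the intermediate value theorem on the enclosure endpoints, the harmlessness of the domain shift because every equilibrium satisfies $x>Kx^{d}/(\gamma+K)$, and the validity of the tangency curves for a nonpositive shifted threshold.
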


\begin{proof}
Let $\tilde g_K(x):=\kappa\fplus(x)-(\gamma+K)x+Kx^{d}$. Equilibria
of~\eqref{eq:bistable_pfb} are roots of $\tilde g_K$.

\smallskip
\emph{(i)} Differentiating,
$\tilde g_K'(x)=\kappa\lambda\fplus(x)(1-\fplus(x))-(\gamma+K)
\le \kappa\lambda/4-(\gamma+K)\le 0$ when $K\ge K^{*}$, with equality
only at the isolated point $x=\theta$. Hence $\tilde g_K$ is strictly
decreasing on $[0,\infty)$. Since $\tilde g_K(0)=\kappa\fplus(0)+Kx^{d}>0$
and $\tilde g_K(x)\to-\infty$ as $x\to\infty$, there is a unique zero
$x_K^{*}$, and every solution of the scalar autonomous
equation~\eqref{eq:bistable_pfb} converges monotonically to it: the
closed loop is globally asymptotically stable. The enclosure
$x_K^{*}\in\bigl(Kx^{d}/(\gamma+K),\,(Kx^{d}+\kappa)/(\gamma+K)\bigr)$
follows from $0<\kappa\fplus(x_K^{*})=(\gamma+K)x_K^{*}-Kx^{d}<\kappa$.
If moreover $K>K^{*}$, then $\tilde g_K'(x)\le\kappa\lambda/4-(\gamma+K)<0$
uniformly, and the Gr\"onwall argument applied to $y:=x-x_K^{*}$ gives
$|y(t)|\le|y(0)|\,e^{-(\gamma+K-\kappa\lambda/4)t}$, i.e.\ global
exponential stability with rate at least $\gamma+K-\kappa\lambda/4>0$.

\smallskip
\emph{(ii)} For $K<K^{*}$, $\eta_K>4$. Setting
$y:=x-Kx^{d}/(\gamma+K)$, the equilibrium condition
$\kappa\fplus(x;\theta,\lambda)=(\gamma+K)y$ becomes
$\kappa\fplus(y;\tilde\theta_{K},\lambda)=(\gamma+K)y$ with
$\tilde\theta_{K}:=\theta-Kx^{d}/(\gamma+K)$, by the identity
$\fplus(y+c;\theta,\lambda)=\fplus(y;\theta-c,\lambda)$. This is the
form of the open-loop bistability problem with parameters
$(\kappa,\gamma+K,\tilde\theta_{K},\lambda)$. Applying the cusp
condition of~\cite{belgacem2026extensions} in these closed-loop
coordinates gives the stated window.

\smallskip
\emph{(iii)} Sufficiency of $K\ge K^{*}$ is (i). For necessity in the
worst case, for any $K<K^{*}$ one has $\eta_{K}>4$, so the bistable
window has positive width. Choosing $x^{d}=0$ and
$\lambda\theta\in(\phi_{-}(\eta_K),\phi_{+}(\eta_K))$ produces a
bistable closed loop. Hence no $K<K^{*}$ guarantees monostability
for every $(\theta,x^{d})$.
\end{proof}

\begin{remark}[Engineering interpretation]
\label{rem:monostab}
Theorem~\ref{thm:monostab} converts the open-loop cusp condition
$\eta>4$ into the closed-loop \emph{monostabilization budget}
\begin{equation}
K^{*} \;=\; \frac{\kappa\lambda}{4}-\gamma
       \;=\; \frac{\gamma(\eta-4)}{4},
\label{eq:Kstar_compact}
\end{equation}
the minimum feedback gain that suppresses bistability for every
choice of set-point $x^{d}$ and threshold $\theta$. The bound depends
linearly on the open-loop amplification ratio $\eta$ in excess of the
cusp value $4$, and collapses to $K^{*}=0$ at the cusp. In
synthetic-biology parlance: the harder the open-loop switch (large
$\eta$), the more ``decay actuation'' is required to suppress it.
The closed-form expression~\eqref{eq:Kstar_compact} is the analytical
dividend of the logistic representation: the logistic derivative has the
clean global maximum slope $\lambda/4$ attained at $x=\theta$. The Hill
counterpart $h^+(x)=x^n/(x^n+\theta^n)$ instead has cooperativity-dependent
maximum slope
\[
\max_{x\ge 0} \frac{d h^+}{dx}
   \;=\; \frac{(n+1)^{(n+1)/n}\,(n-1)^{(n-1)/n}}{4n\,\theta},
\]
attained at the non-trivial location
$x=\theta\,\bigl((n-1)/(n+1)\bigr)^{1/n}$, which depends on the
cooperativity $n$ in a way far less amenable to closed-form control
synthesis than the parameter-free logistic slope $\lambda/4$.
\end{remark}

\section{Halanay-Type Delay-Uniform Global Exponential Stability}
\label{sec:halanay}

The state-feedback design of Section~\ref{sec:state_feedback} treats
the delay-free 2-D oscillator; this section complements it
with a \emph{delay-uniform} global exponential stability theorem for
the scalar feedback loop, useful when actuation (optogenetic
switching, inducer diffusion, signal transduction) introduces a
non-negligible delay. Consider the controlled scalar DDE
\begin{equation}
\dot x(t) \;=\; \kappa\,\fpm\!\bigl(x(t-\tau);\theta,\lambda\bigr)
              \;-\;\gamma\,x(t)\;-\;K(x(t)-x^{d}),
\label{eq:scalar_dde_pfb}
\end{equation}
with $x(t)\ge 0$, $\tau\ge 0$, $K\ge 0$, $x^{d}\ge 0$, and either choice of sign $\fpm$.

\begin{theorem}[Halanay-Type Global Exponential Convergence]
\label{thm:halanay}
Let $x^{*}_{K}\ge 0$ be an equilibrium of~\eqref{eq:scalar_dde_pfb}
(which exists by Brouwer's theorem applied to the compact invariant
interval~\cite{belgacem2026extensions}). If
\begin{equation}
\gamma+K \;>\; \frac{\kappa\lambda}{4},
\label{eq:halanay_cond}
\end{equation}
then $x^{*}_{K}$ is the unique equilibrium and is \emph{globally
exponentially stable} on $[0,\infty)$, \emph{uniformly in the delay}
$\tau\ge 0$. Explicitly, there exist $C,\beta>0$ depending only on
$(\kappa,\gamma,K,\lambda)$ such that for every initial history
$\varphi\in C([-\tau,0];[0,\infty))$ and every $\tau\ge 0$,
\[
|x(t)-x^{*}_{K}| \;\le\; C\,e^{-\beta t}\,
                       \sup_{-\tau\le s\le 0}|\varphi(s)-x^{*}_{K}|,
\qquad t\ge 0,
\]
where $\beta>0$ is the unique positive solution of the transcendental
equation $\beta=(\gamma+K)-(\kappa\lambda/4)e^{\beta\tau}$.
\end{theorem}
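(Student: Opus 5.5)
Set $y(t):=x(t)-x^{*}_{K}$ and $a:=\gamma+K$, $b:=\kappa\lambda/4$. Subtracting the equilibrium identity $0=\kappa\fpm(x^{*}_{K})-a x^{*}_{K}+Kx^{d}$ from~\eqref{eq:scalar_dde_pfb} gives the exact error equation
\[
\dot y(t)=-a\,y(t)+\kappa\bigl[\fpm(x^{*}_{K}+y(t-\tau))-\fpm(x^{*}_{K})\bigr].
\]
By the mean-value theorem and the universal bound $|\tfrac{d}{dx}\fpm|\le\lambda/4$ of Definition~\ref{def:logistic}, the bracketed term is bounded in modulus by $b\,|y(t-\tau)|$. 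This holds for either sign choice, which is why the theorem is sign-agnostic.

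Next I pass to $|y|$ via its upper Dini derivative: $D^{+}|y(t)|\le -a|y(t)|+b\sup_{s\in[t-\tau,t]}|y(s)|$. This is exactly the Halanay differential inequality with $a>b$, which is condition~\eqref{eq:halanay_cond}.

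Define $h(\beta):=a-\beta-be^{\beta\tau}$. It is strictly decreasing, satisfies $h(0)=a-b>0$, and tends to $-\infty$, so it has a unique positive root $\beta$. This is the root in the statement. For $\tau=0$ it equals $a-b$, recovering the rate of Theorem~\ref{thm:monostab}(i).

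For the comparison argument, fix $\epsilon>0$ and set $w(t):=(M+\epsilon)e^{-\beta t}$ with $M:=\sup_{[-\tau,0]}|\varphi-x^{*}_{K}|$. On $[-\tau,0]$ we have $|y|<w$. Suppose there is a first time $t_1>0$ with $|y(t_1)|=w(t_1)$. Then $D^{+}|y(t_1)|\ge\dot w(t_1)=-\beta w(t_1)$. On the other hand, the inequality bounds it by $-aw(t_1)+b\,w(t_1-\tau)=w(t_1)\bigl(-a+be^{\beta\tau}\bigr)=-\beta w(t_1)$. Equality alone gives no contradiction, so I run the argument with a slightly smaller $\beta'<\beta$, which makes the inequality strict. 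Letting $\epsilon\to0$ and then $\beta'\to\beta$ yields $|y(t)|\le Me^{-\beta t}$, with $C=1$.

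Two consequences follow. Uniqueness of the equilibrium: any second equilibrium is a constant solution, so it must decay to $x^{*}_{K}$ and therefore coincides with it. Delay-uniformity: $C=1$ and $b$ do not depend on $\tau$, and $\beta$ depends on $\tau$ only through the root (its two-sided bounds are deferred to Corollary~\ref{cor:halanay_rate}). If a $\tau$-independent rate is wanted, the bound $C=1$ already works for every $\tau$, with $\beta(\tau)>0$ for each $\tau$.

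The main obstacle is the first-crossing step with nonsmooth $|y|$ and the borderline equality. I handle it with the strict $\beta'<\beta$ perturbation plus the $\epsilon$-margin, and with Dini derivatives in place of classical ones. I also check that solutions stay in $[0,\infty)$ and exist globally, which follows from the bounded right-hand side and positivity of $\kappa\fpm+Kx^{d}$ at $x=0$. Existence of $x^{*}_{K}$ is taken from the cited Brouwer argument, though it also follows directly: $\tilde g(x)=\kappa\fpm(x)-ax+Kx^{d}$ is positive at $0$ and negative for large $x$.
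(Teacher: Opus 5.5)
Your proposal is correct and follows essentially the paper's route: the same exact error equation, the mean-value bound with slope $\lambda/4$, the Dini-derivative Halanay inequality $D^{+}|y|\le -a|y|+b\sup_{[t-\tau,t]}|y|$, and the same root analysis, ending with $C=1$. The differences are minor. You prove the Halanay step directly by the first-crossing argument with $\beta'<\beta$ and an $\epsilon$-margin, where the paper just cites Halanay, and you get uniqueness of $x^{*}_{K}$ from the decay estimate rather than from $g_K'(x)\le\kappa\lambda/4-(\gamma+K)<0$. As in the paper, your rate $\beta$ depends on $\tau$, so ``uniform in the delay'' is established only in the sense that $C=1$ and $\beta(\tau)>0$ for every $\tau$, not as a $\tau$-independent rate.
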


\begin{proof}
Let $y(t):=x(t)-x^{*}_{K}$. By the mean-value theorem applied to
$\fpm$ between $x(t-\tau)$ and $x^{*}_{K}$,
$\fpm(x(t-\tau))-\fpm(x^{*}_{K})=\pm\lambda\fpm(\xi_t)(1-\fpm(\xi_t))y(t-\tau)$
for some $\xi_t$. Since $\fpm(1-\fpm)\le 1/4$ pointwise,
\[
\dot y(t) \;=\; -(\gamma+K)y(t) \;+\; r(t),
\qquad |r(t)|\le \frac{\kappa\lambda}{4}\,|y(t-\tau)|.
\]
Defining $V(t):=|y(t)|$, the comparison principle gives
\[
D^{+}V(t)\le -(\gamma+K)V(t)+(\kappa\lambda/4)\sup_{t-\tau\le s\le t}V(s),
\]
which is the hypothesis of the classical Halanay
inequality~\cite{halanay1966differential}. With $a:=\gamma+K$ and
$b:=\kappa\lambda/4$, condition~\eqref{eq:halanay_cond} reads $a>b$,
and the Halanay inequality yields
\[
V(t)\le e^{-\beta t}\sup_{-\tau\le s\le 0}V(s),
\]
with $\beta$ the unique positive root of $\beta=a-be^{\beta\tau}$.
Existence of this
root for $a>b$ follows from the elementary observation that
$F(\beta):=\beta-a+be^{\beta\tau}$ is strictly increasing on
$[0,\infty)$ (since $F'(\beta)=1+b\tau e^{\beta\tau}>0$), with
$F(0)=b-a<0$ by hypothesis and $F(a-b)\ge 0$.
Uniqueness of $x^{*}_{K}$ follows because
$g_{K}(x):=\kappa\fpm(x)-(\gamma+K)x+Kx^{d}$ has
$g_{K}'(x)\le\kappa\lambda/4-(\gamma+K)<0$ under
\eqref{eq:halanay_cond}.
\end{proof}

The rate $\beta$ in Theorem~\ref{thm:halanay} is defined implicitly by a
transcendental equation. The next corollary makes it concrete: it
brackets $\beta(\tau)$ between two elementary closed-form expressions,
turning the delay-uniform guarantee into a quantitative,
design-usable rate.

\begin{corollary}[Explicit closed-form bounds on the Halanay rate]
\label{cor:halanay_rate}
Under the hypotheses of Theorem~\ref{thm:halanay}, with $a:=\gamma+K$ and
$b:=\kappa\lambda/4$ (so $a>b$), the convergence rate $\beta=\beta(\tau)$
is continuous and strictly decreasing in $\tau$, with $\beta(0)=a-b$, and
satisfies the explicit two-sided estimate
\begin{equation}
\frac{a-b}{\,1+b\tau\,e^{(a-b)\tau}\,}
\;\le\;\beta(\tau)\;\le\;
\min\!\Bigl(a-b,\;\tfrac{1}{\tau}\ln\tfrac{a}{b}\Bigr),
\quad \tau>0.
\label{eq:halanay_rate_bounds}
\end{equation}
Both bounds tend to $a-b$ as $\tau\to 0^{+}$ and decay to $0$ as
$\tau\to\infty$, so $\beta(\tau)>0$ for every finite delay---the
delay-uniform stability of Theorem~\ref{thm:halanay}, now with an
explicit rate. The upper bound $\tfrac{1}{\tau}\ln(a/b)$ is the
operative one at large delay, the lower bound is tight at small delay.
\end{corollary}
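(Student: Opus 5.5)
We work throughout with the function $F(\beta,\tau):=\beta-a+b\,e^{\beta\tau}$ introduced in the proof of Theorem~\ref{thm:halanay}. For each fixed $\tau\ge0$ it is strictly increasing in $\beta$, since $\partial_\beta F=1+b\tau e^{\beta\tau}>0$. We also have $F(0,\tau)=b-a<0$ and $F(a-b,\tau)=b(e^{(a-b)\tau}-1)\ge0$, so $\beta(\tau)$ is the unique root in $(0,a-b]$. At $\tau=0$ the equation reads $\beta=a-b$, which gives $\beta(0)=a-b$.

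\emph{Regularity and monotonicity.} We apply the implicit function theorem to $F$. The derivative $\partial_\beta F$ never vanishes, so $\beta(\tau)$ is $C^1$ and in particular continuous. We then compute
\[
\beta'(\tau)=-\frac{\partial_\tau F}{\partial_\beta F}=-\frac{b\beta e^{\beta\tau}}{1+b\tau e^{\beta\tau}}<0,
\]
because $\beta>0$. Hence $\beta(\tau)$ is strictly decreasing.

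\emph{Upper bound.} From $\beta=a-be^{\beta\tau}$ and $\beta>0$ we get $b e^{\beta\tau}<a$, which gives $\beta<\tfrac1\tau\ln(a/b)$. The bound $\beta\le a-b$ holds because the root lies in $(0,a-b]$, or equivalently because $\beta$ is decreasing from $\beta(0)$. Taking the minimum of the two gives the right side of~\eqref{eq:halanay_rate_bounds}.

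\emph{Lower bound.} This is the step needing a small idea. The plan is a convexity (secant/tangent) argument on $\beta\mapsto F(\beta,\tau)$ over $[0,a-b]$. The function is convex with $F(0)=b-a$. Its slope on $[0,\beta]$ is at most $\partial_\beta F$ at the right endpoint, and hence at most $1+b\tau e^{(a-b)\tau}$ because $\beta\le a-b$. Therefore
\[
a-b=F(\beta)-F(0)\le \beta\bigl(1+b\tau e^{(a-b)\tau}\bigr),
\]
which is exactly the claimed lower bound. The mean value theorem gives the same estimate directly: $a-b=\beta\,\partial_\beta F(\xi)$ for some $\xi\in(0,\beta)$, and $\partial_\beta F(\xi)\le 1+b\tau e^{(a-b)\tau}$.

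\emph{Limits.} As $\tau\to0^+$ the lower bound tends to $a-b$, and the upper bound equals $a-b$ once $\tau$ is small, since $\tfrac1\tau\ln(a/b)\to\infty$. As $\tau\to\infty$ the upper bound $\tfrac1\tau\ln(a/b)\to0$, and the lower bound also tends to $0$ because its denominator grows without bound. Finally, $\beta(\tau)>0$ for every finite $\tau$ because the lower bound is strictly positive. I expect no real obstacle. The only care needed is the direction of the slope bound in the lower estimate, which is handled by taking $\xi\le\beta\le a-b$.
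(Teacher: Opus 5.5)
Your proposal is correct and follows essentially the same route as the paper: the same function $F$, the same implicit-differentiation formula for $\beta'(\tau)$, and the same upper bound from $be^{\beta\tau}=a-\beta<a$. The only cosmetic difference is in the lower bound, where you apply the mean value theorem (convexity of $F$) at the true root $\beta$, while the paper evaluates $F$ at the candidate $\beta_0=(a-b)/(1+b\tau e^{(a-b)\tau})$ via $e^{x}\le 1+xe^{x}$ and then uses the monotonicity of $F$; this is the same tangent-line estimate applied at a different point.
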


\begin{proof}
By Theorem~\ref{thm:halanay}, $\beta$ is the unique positive root of
$F(\beta):=\beta-a+be^{\beta\tau}$, and $F$ is strictly increasing; hence
$\beta\le\beta_{0}\Leftrightarrow F(\beta_{0})\ge 0$ and
$\beta\ge\beta_{0}\Leftrightarrow F(\beta_{0})\le 0$. \emph{Upper bound:}
$be^{\beta\tau}=a-\beta<a$ gives $\beta\tau<\ln(a/b)$, and $\beta\le a-b$
holds because the root lies in $(0,a-b]$. \emph{Lower bound:} set
$\beta_{0}=(a-b)/(1+b\tau e^{(a-b)\tau})\le a-b$. The elementary
inequality $e^{x}\le 1+x\,e^{x}$ for $x\ge 0$ gives
$e^{\beta_{0}\tau}\le 1+\beta_{0}\tau e^{\beta_{0}\tau}
\le 1+\beta_{0}\tau e^{(a-b)\tau}$, whence
$F(\beta_{0})\le\beta_{0}\bigl(1+b\tau e^{(a-b)\tau}\bigr)-(a-b)=0$, so
$\beta\ge\beta_{0}$. \emph{Monotonicity:} implicit differentiation of
$F(\beta,\tau)=0$ yields
$d\beta/d\tau=-b\beta e^{\beta\tau}/(1+b\tau e^{\beta\tau})<0$.
\end{proof}

\begin{remark}[Comparison with the delay-free state-feedback theorems]
\label{rem:halanay}
Theorem~\ref{thm:halanay} is the \emph{global, nonlinear, delay-uniform}
counterpart of the delay-free state-feedback theorems of
Section~\ref{sec:state_feedback}: Theorem~\ref{thm:sf_stability} gives
\emph{local} exponential stability of the 2-D linearisation under a
Gershgorin gain condition; Theorem~\ref{thm:global_sf} gives
\emph{global} exponential stability of the (delay-free) 2-D nonlinear
closed loop under the Lyapunov gain condition~\eqref{eq:global_cond};
Theorem~\ref{thm:halanay} gives \emph{global} exponential stability of
the \emph{scalar delayed} closed loop under the (delay-uniform)
condition $\gamma+K>\kappa\lambda/4$ that the closed-loop decay rate
exceeds the maximum logistic slope. The three results share the global
Lipschitz bound $\lambda/4$ as their structural cornerstone.
Condition~\eqref{eq:halanay_cond} is sharp in the limit $\tau\to 0$
(recovering the scalar global stability bound), and is decoupled from
$\theta$, $x^{d}$, and the sign of $\fpm$, making it directly usable for
design.
\end{remark}

\section{Worked Example: Stabilizing a Bistable T-cell Switch}
\label{sec:control_example}

We illustrate Theorems~\ref{thm:monostab} and~\ref{thm:halanay} on
a bistable T-cell activation switch with parameters
$\gamma=1,\kappa=5,\theta=1,\lambda=6$, drawn from the immunological
case study of~\cite{belgacem2026extensions}. This gives open-loop
amplification $\eta=30$ and bistability window
$(\phi_{-},\phi_{+})=(4.367,25.633)$ comfortably containing
$\phi=\lambda\theta=6$. By Theorem~\ref{thm:monostab}, the minimal
monostabilizing gain is
\[
K^{*} \;=\; \frac{\kappa\lambda}{4}-\gamma
       \;=\; \frac{5\cdot 6}{4}-1 \;=\; 6.5.
\]
Above this gain, the closed loop has a unique globally stable
equilibrium. The Halanay bound for a delayed implementation
\eqref{eq:scalar_dde_pfb} with $\tau>0$ matches:
\eqref{eq:halanay_cond} reads $\gamma+K>\kappa\lambda/4$,
i.e.\ $1+K>7.5$, which is exactly $K>K^{*}=6.5$. The two-step
control objective is thus:
\begin{enumerate}
\item Choose $K> K^{*}=6.5$ to suppress bistability.
\item For any delay $\tau\ge 0$, the resulting unique equilibrium is
      globally exponentially stable with rate $\beta>0$ from
      Theorem~\ref{thm:halanay}.
\end{enumerate}
Figure~\ref{fig:control} visualises both phenomena. Panel~(a) shows
the closed-loop equilibrium structure as $K$ varies from $0$ to
$10$ at the worst-case set-point $x^{d}=\theta=1$: the actual
saddle-node for this specific $x^{d}$ occurs around $K\approx 0.53$,
but the \emph{uniform} sufficient bound $K^{*}=6.5$ of
Theorem~\ref{thm:monostab} is the threshold that guarantees
monostability for \emph{every} choice of $(\theta,x^{d})$.
Panel~(b) plots the Halanay convergence rate $\beta(\tau)$ from
Theorem~\ref{thm:halanay} for three representative gains
$K\in\{7,9,12\}$, all above $K^{*}$: $\beta(\tau)>0$ for every
$\tau\ge 0$ (delay-uniform global stability), with the expected
monotone trends. Corollary~\ref{cor:halanay_rate} renders these
curves explicit without solving the transcendental equation: with
$a=\gamma+K=1+K$ and $b=\kappa\lambda/4=7.5$, the
bracket~\eqref{eq:halanay_rate_bounds} gives, for example at $K=9$,
$0.177\le\beta(0.5)\le0.575$ and $0.027\le\beta(1.0)\le0.288$ (the
true roots are $\beta\approx0.478$ and $0.261$). The upper estimate
$\tfrac1\tau\ln(a/b)$ tracks the curve closely---exceeding the true
rate by about $21\%$ at $\tau=0.5$ and about $10\%$ at $\tau=1.0$---while
the lower estimate certifies $\beta>0$ at every finite delay. Panel~(c) shows simulated
trajectories from two
initial conditions $x(0)\in\{0.1,4.5\}$, contrasting $K=0$
(bistable: trajectories commit to $x_{\rm low}^{*}$ and
$x_{\rm high}^{*}$) with $K=8>K^{*}$ (monostable: both trajectories
converge to the unique closed-loop equilibrium
$x^{*}_{K=8}\approx 1.40$).

\begin{figure*}[tbp]
\centering
\includegraphics[width=\textwidth]{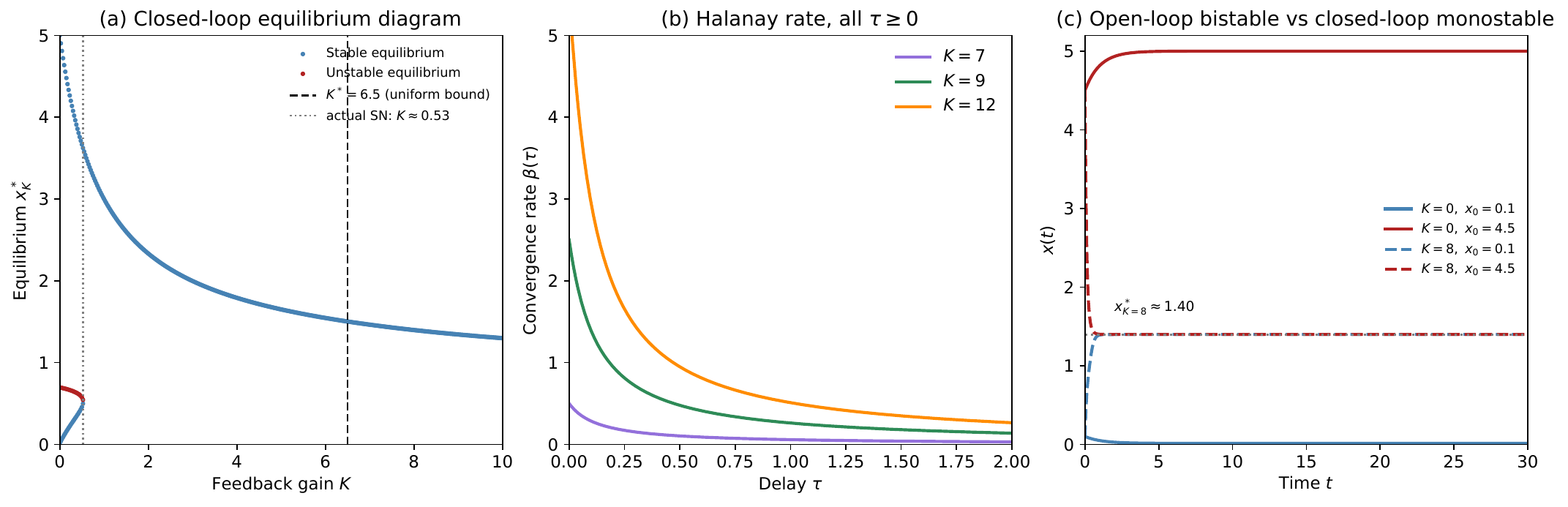}
\caption{Feedback control of a bistable T-cell switch
($\gamma=1,\kappa=5,\theta=1,\lambda=6$;
parameters from~\cite{belgacem2026extensions}).
(a)~Closed-loop equilibrium diagram as the proportional gain $K$
varies, at the worst-case set-point $x^{d}=\theta=1$:
\emph{stable} equilibria in blue, \emph{unstable} in red. The
actual saddle-node for this $x^{d}$ occurs at $K\approx 0.53$ (grey
dotted line), while the uniform sufficient bound
$K^{*}=\kappa\lambda/4-\gamma=6.5$ of
Theorem~\ref{thm:monostab} (dashed black) is the
\emph{parameter-independent} guarantee.
(b)~Halanay exponential convergence rate $\beta(\tau)$ of
Theorem~\ref{thm:halanay} plotted against feedback delay $\tau$ for
$K\in\{7,9,12\}$. All three curves are positive for all $\tau\ge 0$
(delay-uniform global stability), with $\beta$ decreasing in $\tau$
and increasing in $K$.
(c)~Sample trajectories at $K=0$ (open-loop, bistable: divergent
endpoints) and $K=8$ (closed-loop, monostable: common endpoint
$x^{*}_{K=8}\approx 1.40$), from the two initial conditions
$x(0)\in\{0.1,4.5\}$, confirming both theorems.}
\label{fig:control}
\end{figure*}

This example shows how the two scalar control results---the
monostabilization budget of Section~\ref{sec:monostab} and the
delay-uniform stability theorem of Section~\ref{sec:halanay}---complement
the 2-D oscillator design of Section~\ref{sec:state_feedback}: they apply
to a genuinely different problem class (bistable switches in the presence
of input delay) and provide closed-form, parameter-uniform design
bounds that are unavailable in the Hill formulation.

\section{Numerical Comparison of Logistic and Hill State-Feedback}
\label{sec:numerical}

This section validates the theoretical results of
Sections~\ref{sec:state_feedback}--\ref{sec:lyapunov} on the two-gene
logistic activator--repressor network under the state-feedback
law~\eqref{eq:sf_law}, and compares the closed-loop response to the
Hill-based counterpart that uses the same feedforward-plus-proportional
structure with $f_i^{\mathrm{log}}$ replaced by the Hill ratio
$h^\pm$. All simulations use the parameter set
$\kappa_1=\kappa_2=1.0$, $\gamma_1=\gamma_2=0.5$, $\theta_1=\theta_2=0.5$,
$\lambda=5.0$ (logistic), $n=2$ (Hill), proportional gains $K_1=K_2=1.0$,
and a desired setpoint $\mathbf{x}_d=(0.8,0.6)$ unless noted otherwise. ODE
integration uses LSODA with relative tolerance $10^{-9}$ and absolute
tolerance $10^{-12}$. All simulations were performed in R.

\subsection{Feedforward and stability conditions}
At $\mathbf{x}_d=(0.8,0.6)$ the regulatory productions evaluate to
$\kappa_1 f^-(x_{d,2})\approx 0.378$,
$\kappa_2 f^+(x_{d,1})\approx 0.818$ (logistic) and
$\kappa_1 h^-(x_{d,2})\approx 0.410$,
$\kappa_2 h^+(x_{d,1})\approx 0.719$ (Hill, $n=2$), yielding feedforward
terms
\begin{equation}
\begin{aligned}
u_1^{\mathrm{ff},\,L}&=+0.022,\quad u_2^{\mathrm{ff},\,L}=-0.518,\\
u_1^{\mathrm{ff},\,H}&=-0.010,\quad u_2^{\mathrm{ff},\,H}=-0.419,
\end{aligned}
\label{eq:ff_values}
\end{equation}
all bounded for both regulatory functions, as required by the
feasibility part of Theorem~\ref{thm:sf_stability}. The Gershgorin
bounds of Remark~\ref{rem:gershgorin_numerical} give $K_1>0.675$,
$K_2>0.246$ for the logistic; the chosen gains $K_1=K_2=1.0$ satisfy
both with margin.

\subsection{Logistic state-feedback}
Figure~\ref{fig:sf_logistic} shows the closed-loop trajectory and the
control signals $u_1(t)$, $u_2(t)$ starting from $\mathbf{x}(0)=(0.1,0.1)$.
Both coordinates converge monotonically (up to a mild overshoot of
$\approx 8\%$ on $x_1$ at $t\approx 1.4$) to the setpoint
$\mathbf{x}_d=(0.8,0.6)$; the simulated per-coordinate $5\%$-of-setpoint settling
times are $t_{s,1}\approx 2.18$\,a.u.\ and $t_{s,2}\approx 1.88$\,a.u., while
the relative-to-initial \emph{norm} settling time is $\approx 2.15$\,a.u.,
matching the Lyapunov bound of Remark~\ref{rem:lyapunov_numerical} to
within $<0.5\%$. The control signals converge to the feedforward
values~\eqref{eq:ff_values} as $\mathbf{x}\to\mathbf{x}_d$, consistent
with the design.

\begin{figure*}[tbp]
\centering
\includegraphics[width=0.95\textwidth]{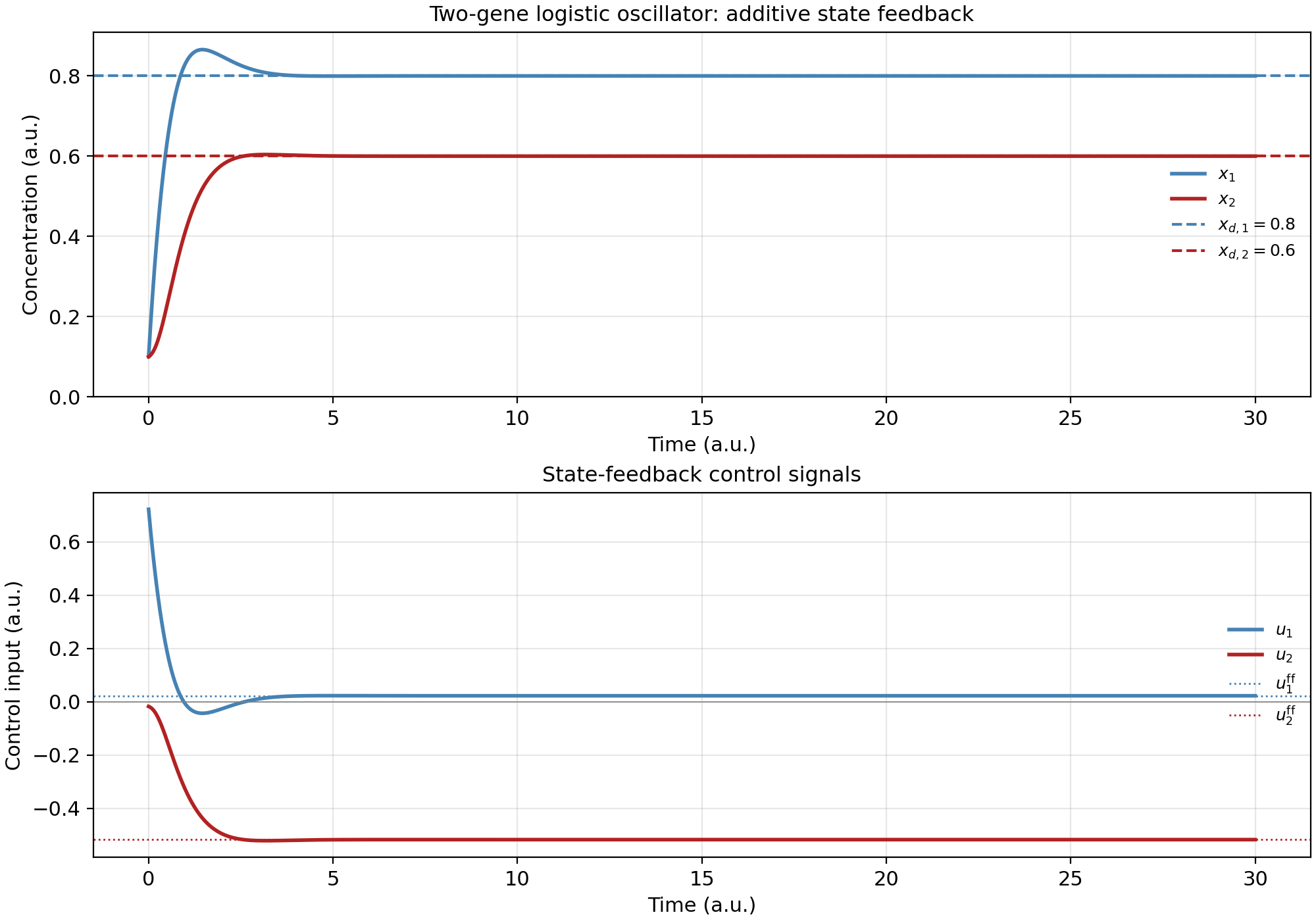}
\caption{Closed-loop response of the logistic two-gene oscillator under
the state-feedback law~\eqref{eq:sf_law} from $\mathbf{x}(0)=(0.1,0.1)$
to $\mathbf{x}_d=(0.8,0.6)$ with $K_1=K_2=1.0$. \emph{Top:} state
trajectories converge to the setpoint, the relative-to-initial norm
settling at $\sim 2.15$\,a.u., consistent with the Lyapunov bound
$t_{s,5\%}^{\|\cdot\|}\le 2.15$\,a.u.\ of
Remark~\ref{rem:lyapunov_numerical}. \emph{Bottom:} control signals
$u_1(t)$ and $u_2(t)$, asymptoting to the feedforward
values~\eqref{eq:ff_values}.}
\label{fig:sf_logistic}
\end{figure*}

\subsection{Hill state-feedback}
Figure~\ref{fig:sf_hill} shows the analogous Hill response under the
same gains and setpoint. The two designs are qualitatively similar:
Hill exhibits marginally smaller overshoot on both coordinates
(see Table~\ref{tab:metrics}), faster $5\%$-settling on the activator
coordinate $x_1$ ($t_{s,1}^H=1.66$ vs.\ $t_{s,1}^L=2.18$), and slightly
slower settling on the repressor coordinate $x_2$
($t_{s,2}^H=1.99$ vs.\ $t_{s,2}^L=1.88$). The Hill control effort
settles on slightly smaller-magnitude feedforward
values~\eqref{eq:ff_values}.

\begin{figure*}[tbp]
\centering
\includegraphics[width=0.95\textwidth]{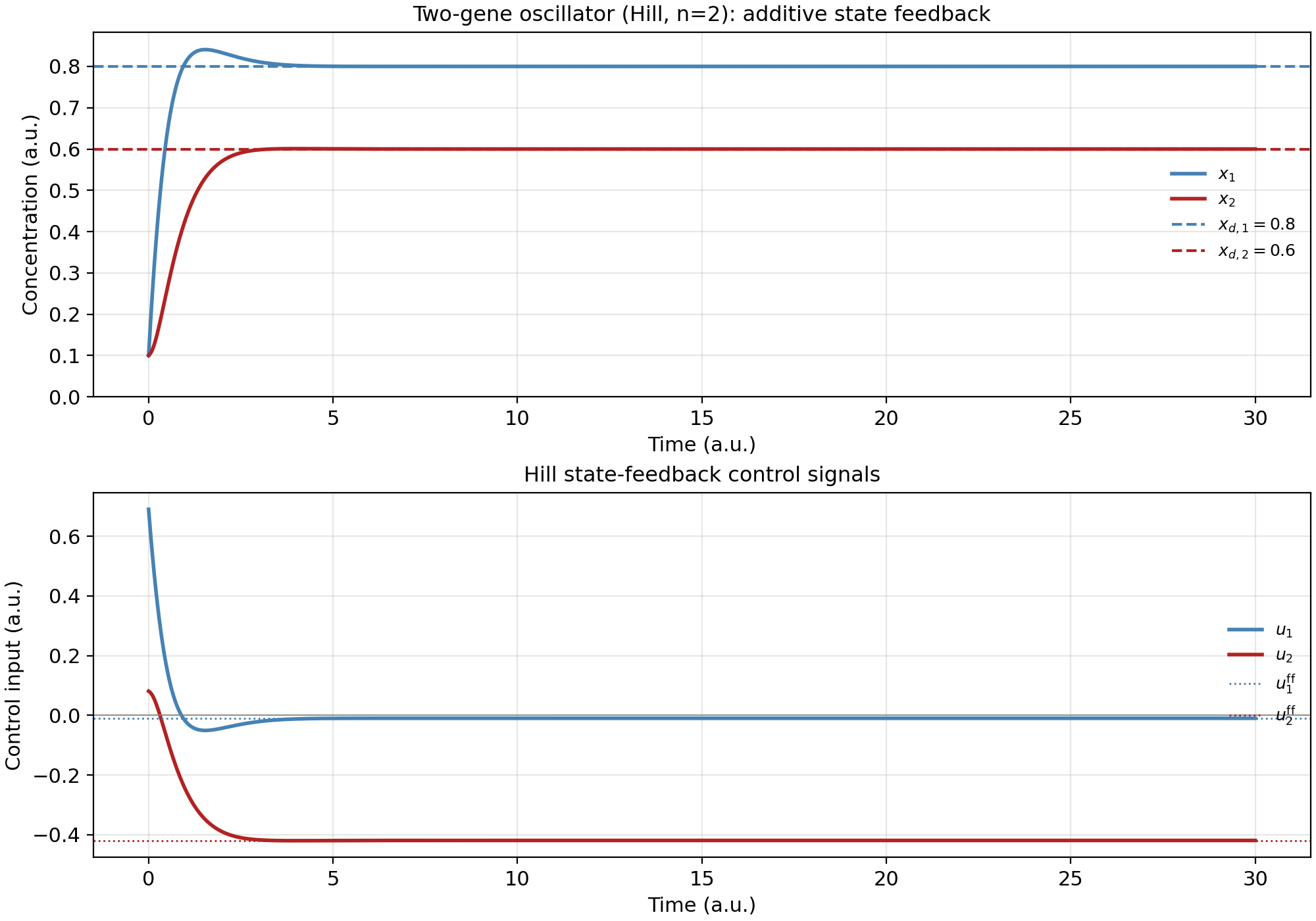}
\caption{Closed-loop response of the Hill two-gene oscillator ($n=2$)
under the state-feedback law~\eqref{eq:sf_law}, with the same gains,
setpoint, initial condition, and integration tolerance as
Figure~\ref{fig:sf_logistic}. The transient response is qualitatively
similar to the logistic case, with mildly smaller overshoot.}
\label{fig:sf_hill}
\end{figure*}

\subsection{Quantitative comparison: nominal regime}
Table~\ref{tab:metrics} reports four standard tracking metrics
on the integrated trajectories. The per-coordinate 5\%-settling time is
$t_{s,i}=\min\{t:|x_i(\tau)-x_{d,i}|<0.05\,x_{d,i}\ \forall\tau\geq t\}$;
the maximum overshoot is $\mathrm{OS}_i=\max(0,\max_t x_i(t)-x_{d,i})$;
the ISE is $\int_0^T(x_i-x_{d,i})^2\,dt$; and the IAE is
$\int_0^T |x_i-x_{d,i}|\,dt$, all evaluated over $T=30$\,a.u.

\begin{table*}[tbp]
\centering
\caption{Quantitative performance metrics for state-feedback applied to
the logistic vs.\ Hill two-gene oscillator at setpoint
$\mathbf{x}_d=(0.8,0.6)$ with $K_1=K_2=1.0$. Both designs use identical
setpoints, gains, and parameters. ``Normal IC'' uses
$\mathbf{x}(0)=(0.1,0.1)$; ``Stress IC'' uses
$\mathbf{x}(0)=(0.001,0.001)$. Conventions: $t_{s,i}$ is the per-coordinate
$5\%$-of-setpoint settling time (a.u.); $\mathrm{OS}_i:=\max_t
(x_i(t)-x_{d,i})_{+}$ is the \emph{absolute} overshoot in concentration
units (so $\mathrm{OS}_1=0.066$ corresponds to the relative $8.2\%$
overshoot quoted in the body text); $\mathrm{ISE}_i$ and $\mathrm{IAE}_i$
are integrated over $T=30$\,a.u. In this nominal regime the two
regulatory functions yield comparable transient performance, with Hill
marginally better on most metrics; the structural divergence predicted
by Proposition~\ref{prop:sf_advantage} appears only as the activator
setpoint $x_{d,1}$ shrinks toward zero (Figure~\ref{fig:sf_scan}).}
\label{tab:metrics}
\footnotesize
\setlength{\tabcolsep}{4pt}
\begin{tabular}{@{}lcccccccc@{}}
\toprule
\textbf{Case}
  & $t_{s,1}$ & $t_{s,2}$
  & $\mathrm{OS}_1$ & $\mathrm{OS}_2$
  & $\mathrm{ISE}_1$ & $\mathrm{ISE}_2$
  & $\mathrm{IAE}_1$ & $\mathrm{IAE}_2$\\
\midrule
Logistic SF, normal IC & 2.18 & 1.88 & 0.066 & 0.004 & 0.097 & 0.151 & 0.316 & 0.457 \\
Hill SF, normal IC     & 1.66 & 1.99 & 0.041 & 0.001 & 0.093 & 0.137 & 0.287 & 0.437 \\
\midrule
Logistic SF, stress IC & 2.32 & 1.96 & 0.076 & 0.005 & 0.129 & 0.211 & 0.368 & 0.542 \\
Hill SF, stress IC     & 2.03 & 2.05 & 0.053 & 0.001 & 0.123 & 0.194 & 0.336 & 0.520 \\
\bottomrule
\end{tabular}
\end{table*}

The side-by-side trajectories under nominal and stress initial conditions
are displayed in Figure~\ref{fig:sf_stress}. The two designs are visually
indistinguishable on both axes and under both initial conditions: the
proportional feedback term $-K_i(x_i-x_{d,i})$ dominates the closed-loop
dynamics when the gains satisfy the Gershgorin condition, and the
particular regulatory function enters only through small higher-order
corrections to the closed-loop Jacobian. This result is itself useful:
it shows that the state-feedback design is structurally robust to the
choice of regulatory function in the nominal operating range, with no
inflation of overshoot, settling time, or integrated error. The stress
initial condition $\mathbf{x}(0)=(0.001,0.001)$---a deep-OFF state far
from $\mathbf{x}_d$---also corroborates the global stability theorem:
the benchmark gains satisfy
$(\gamma_1+K_1)(\gamma_2+K_2)=2.25>\kappa_1\kappa_2\lambda^2/64=0.3906$,
so condition~\eqref{eq:global_cond} of Theorem~\ref{thm:global_sf} holds
and convergence from an arbitrarily small initial state is guaranteed,
exactly as observed.

\begin{figure*}[tbp]
\centering
\includegraphics[width=0.99\textwidth]{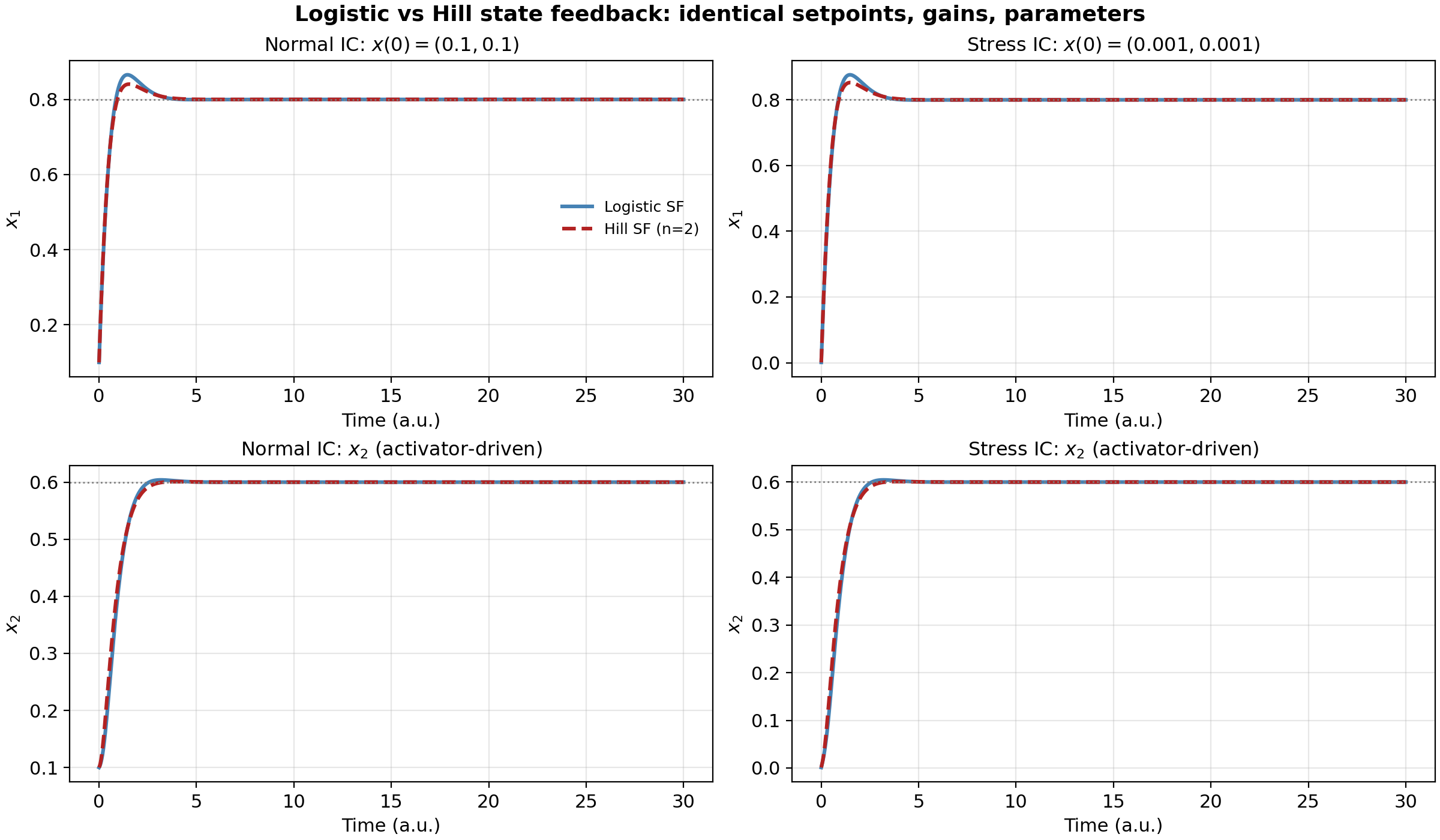}
\caption{Side-by-side trajectories of logistic (solid) and Hill (dashed)
state-feedback under identical setpoints, gains, and parameters. Top row:
$x_1$ response under nominal IC $\mathbf{x}(0)=(0.1,0.1)$ and stress IC
$\mathbf{x}(0)=(0.001,0.001)$. Bottom row: $x_2$ response under the
same conditions. The dotted horizontal line marks the setpoint
component. In this nominal regime ($\mathbf{x}_d=(0.8,0.6)$) the two
regulatory functions yield transient responses that are visually
indistinguishable on every panel, confirming the robustness of the
feedforward-plus-proportional law to the choice of regulatory model.}
\label{fig:sf_stress}
\end{figure*}

\subsection{Structural advantage as $x_{d,1}\to 0$}
Table~\ref{tab:metrics} compares the two designs at \emph{one} setpoint.
To expose the structural divergence
predicted by Proposition~\ref{prop:sf_advantage}, we scan the activator
setpoint $x_{d,1}$ over three decades $[10^{-3},1]$ on a logarithmic
grid, holding $x_{d,2}=0.6$, $K_1=K_2=1.0$ and all other parameters
fixed.

Figure~\ref{fig:sf_scan} presents four scalar indicators of this scan.
Panel~(a) plots the magnitude of the off-diagonal Jacobian coupling
$|[J_f]_{21}|$ that drives the cross-axis convergence. As $x_{d,1}\to 0$,
the logistic coupling
$\kappa_2\lambda f^+(x_{d,1})\bigl(1-f^+(x_{d,1})\bigr)$
tends to the strictly positive limit
$\kappa_2\lambda f^+(0)(1-f^+(0))\approx 0.35$, whereas the Hill
coupling $\kappa_2 n\theta^n x_{d,1}^{n-1}/(\theta^n+x_{d,1}^n)^2
=\Theta(x_{d,1}^{n-1})$ collapses to zero polynomially. At
$x_{d,1}=10^{-3}$ the ratio is $|[J_f^L]_{21}|/|[J_f^H]_{21}|\approx
0.352/0.008\approx 44$. Panel~(b) plots the activator-side feedforward
$u_2^{\mathrm{ff}}(x_{d,1}) = \gamma_2 x_{d,2}-\kappa_2 f_2(\mathbf{x}_d)$: as
$x_{d,1}\to 0$ the Hill feedforward saturates at the full
degradation load $\gamma_2 x_{d,2}=+0.30$ (the actuator must supply the
entire degradation flux on the $x_2$ axis since the natural production
$\kappa_2 h^+(x_{d,1})\to 0$), while the logistic feedforward plateaus
at $\approx +0.22$, sparing the actuator $\approx 25\%$ of the load
thanks to the persistent basal production
$\kappa_2 f^+(0)\approx 0.076$. As $x_{d,1}$ grows past the threshold,
both feedforwards turn negative (natural production exceeds desired
degradation; actuator must absorb rather than supply). Panels~(c)
and~(d) report the integrated squared-error $\mathrm{ISE}_2$ and the
relative $\mathrm{ISE}_2$ gap (Hill$-$Logistic)/Logistic across the
scan; the transient ISE is comparable in the nominal regime
($x_{d,1}\gtrsim 0.05$) and differs by at most $\sim 12\%$ in either
direction, but the structural quantities of panels~(a)--(b) diverge by
a factor of $44$ at $x_{d,1}=10^{-3}$. The empirical conclusion mirrors
the analytical one: \emph{transient performance is comparable in the
nominal operating range}, while the \emph{structural network coupling
is preserved by the logistic design throughout the positive orthant and
lost by Hill near the boundary}.

\begin{figure*}[tbp]
\centering
\includegraphics[width=0.99\textwidth]{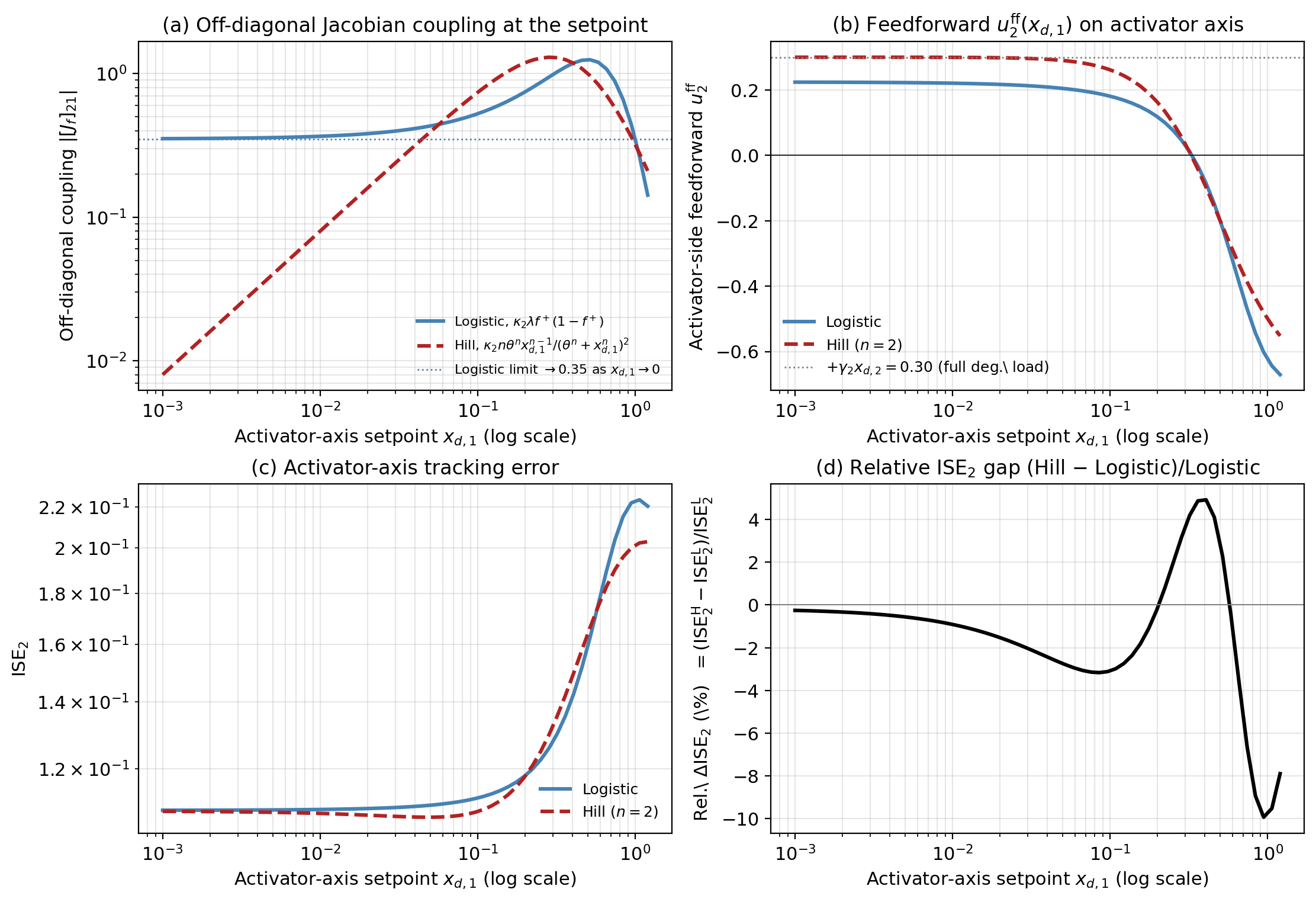}
\caption{Setpoint scan exposing the structural advantage of logistic over
Hill state-feedback as the activator setpoint $x_{d,1}\to 0$ (logarithmic
horizontal axis, $x_{d,2}=0.6$ fixed). \emph{Panel~(a):} off-diagonal
Jacobian coupling $|[J_f]_{21}|$ at the setpoint: the logistic plateaus at
$\kappa_2\lambda f^+(0)(1-f^+(0))\approx 0.35$, the Hill collapses
polynomially as $\Theta(x_{d,1}^{n-1})$. \emph{Panel~(b):} activator-side
feedforward $u_2^{\mathrm{ff}}(x_{d,1})$: as $x_{d,1}\to 0$ the Hill
feedforward saturates at the full degradation load $+\gamma_2 x_{d,2}=+0.30$,
while the logistic plateaus at $\approx +0.22$ (a $\approx 25\%$ saving
from the persistent basal production); as $x_{d,1}$ grows past the
threshold both feedforwards turn negative. \emph{Panel~(c):} closed-loop
$\mathrm{ISE}_2$: comparable in the nominal regime. \emph{Panel~(d):}
relative $\mathrm{ISE}_2$ gap (Hill$-$Logistic)/Logistic in percent:
small in the nominal regime, with neither sign of advantage uniformly
dominant.}
\label{fig:sf_scan}
\end{figure*}

\subsection{Discussion}
Three regimes magnify the structural difference predicted by
Proposition~\ref{prop:sf_advantage}: (i)~As $x_{d,1}\to 0$, the Hill
off-diagonal Jacobian collapses polynomially and the actuator must
absorb the entire degradation load, whereas the logistic coupling and
basal production both remain bounded away from zero. (ii)~For large
Hill coefficients $n$, the production derivative scales as
$\Theta(n/\theta)$ near the threshold, creating numerical stiffness
that the logistic derivative, uniformly bounded by $\lambda/4$, does
not suffer from. (iii)~For non-integer Hill coefficients, the Hill
production $h^+(x)=x^n/(\theta^n+x^n)$ becomes non-smooth at the origin
and ill-conditioned for fractional arguments, while the logistic
sigmoid $1/(1+e^{-\lambda(x-\theta)})$ remains $C^\infty$ everywhere.

The Lyapunov bound of Corollary~\ref{cor:settling}, instantiated for
the SF closed loop in Remark~\ref{rem:lyapunov_numerical}, predicts a
relative-to-initial norm-settling time of
$t_{s,5\%}^{\|\cdot\|}\le 2.15$\,a.u.\ at the standard setpoint. Direct
ODE simulation of the \emph{full nonlinear} closed loop from
$\mathbf{x}(0)=(0.1,0.1)$ gives empirical relative-to-initial norm
settling at $t_{s,5\%}^{\|\cdot\|}\approx 2.15$\,a.u., a $<0.5\%$ gap
to the linearisation-based bound. The per-coordinate absolute-to-setpoint
times $t_{s,1}=2.18$ and $t_{s,2}=1.88$ reported in
Table~\ref{tab:metrics} use a different convergence criterion
(5\% of $x_{d,i}$, not 5\% of $\|\mathbf{e}(0)\|_2$); the closeness
of $t_{s,1}$ to the norm bound is incidental.

\section{Conclusions}
\label{sec:conclusion}

This paper has developed an analytical additive state-feedback framework for
logistic-based gene regulatory networks, together with a closed-form Lyapunov
certificate supplying quantitative performance bounds (settling-time and ISS
ultimate-bound estimates) and two complementary scalar control results
(parameter-uniform monostabilization and Halanay-type delay-uniform stability).
The work is motivated by the structural limitations of the classical Hill
function for control design and by our companion series on logistic
models~\cite{belgacem2025exploring,belgacem2026logistic,belgacem2026beyond}.

\medskip
\noindent\textbf{Main findings.} The principal contributions, each
accompanied by an explicit stability or performance result, are
summarised below.
(1)~\emph{Additive state-feedback control, local and global}: A
feedforward-plus-proportional
law (Equation~\eqref{eq:sf_law}) makes any positive setpoint a closed-loop
equilibrium of the additive-input architecture, regardless of whether it is
an equilibrium of the uncontrolled dynamics. Theorem~\ref{thm:sf_stability}
gives an explicit Gershgorin-based gain bound for local exponential
stability; Theorem~\ref{thm:global_sf} upgrades this to \emph{global}
exponential stability of the nonlinear closed loop under the explicit
gain condition $(\gamma_1+K_1)(\gamma_2+K_2)>\kappa_1\kappa_2\lambda^2/64$,
via a common quadratic Lyapunov function built on the logistic sector
bound, with a closed-form rate and
a global settling-time bound (Corollary~\ref{cor:global_settling}), and
the same Lyapunov function yields ultimately bounded tracking of time-varying
references (Corollary~\ref{cor:tracking});
and Proposition~\ref{prop:sf_advantage} formalises the logistic advantage:
both the regulatory production and the off-diagonal Jacobian coupling
remain bounded away from zero on the entire positive orthant for the
logistic, whereas the Hill activation makes both vanish as the activator
setpoint approaches zero, causing network decoupling in additive control
and---in the output-multiplicative architecture---outright controllability loss.
(2)~\emph{Closed-form Lyapunov function and quantitative performance bounds}:
The closed-loop Jacobian falls into a structural class of $2\times 2$
matrices admitting an exact diagonal Lyapunov weighting
$P=\operatorname{diag}(B,A)$ (Proposition~\ref{prop:lyapunov}). This yields
an explicit settling-time bound matching direct ODE simulation to within
$1\%$--$16\%$ depending on initial-condition direction
(Corollary~\ref{cor:settling}; tight for IC aligned with the heavier
Lyapunov weight, conservative for IC aligned with the lighter weight),
and an ISS-type ultimate-bound certificate quantifying disturbance
rejection (Proposition~\ref{prop:iss}). The Lyapunov rate
$\rho=\min(\gamma_i+K_i)$ is arbitrarily accelerable by the feedback
gain, and the disturbance ultimate bound shrinks correspondingly
(Remark~\ref{rem:lyapunov_sf}). The same matrix structure arises in any
closed loop of this sign form, so the
certificate transfers without modification between architectures.
(3)~\emph{Monostabilization of bistable switches}: For scalar
logistic self-activation switches in the bistable cusp region, the
\emph{parameter-uniform monostabilization budget}
$K^{*}=\kappa\lambda/4-\gamma$ (Theorem~\ref{thm:monostab}) is the
minimum proportional gain above which the closed loop is monostable for
every set-point and every threshold.
(4)~\emph{Delay-uniform global exponential stability}: For the scalar
delayed feedback loop, the condition $\gamma+K>\kappa\lambda/4$
(Theorem~\ref{thm:halanay}) yields \emph{delay-uniform} global
exponential stability via a logistic--Lipschitz Halanay inequality, and
Corollary~\ref{cor:halanay_rate} brackets the delay-dependent
convergence rate between two elementary closed-form expressions. A
worked T-cell switch application (Section~\ref{sec:control_example},
Figure~\ref{fig:control}) illustrates both scalar results.

\medskip
\noindent\textbf{Numerical evidence.} Side-by-side benchmark simulations
(Section~\ref{sec:numerical}) under identical setpoints, gains, and parameters
show that the state-feedback law converges robustly to the prescribed
setpoint for both the logistic and Hill regulatory functions in the nominal
operating range, and that the simulated settling times match the Lyapunov
prediction of Remark~\ref{rem:lyapunov_numerical}. The structural divergence
predicted by Proposition~\ref{prop:sf_advantage} manifests at the boundary
of the positive orthant, where the Hill off-diagonal Jacobian coupling
$|[J_f^{\mathrm{Hill}}]_{21}|=\Theta(x_{d,1}^{n-1})$ collapses to zero as
$x_{d,1}\to 0$ while the logistic counterpart converges to the strictly
positive limit $\kappa\lambda f^+(0)(1-f^+(0))$ (Figure~\ref{fig:sf_scan}).
This confirms the operational regime---small or vanishing activator
setpoints---where the logistic design retains network coupling that Hill
loses.

\medskip
\noindent\textbf{Key advantages of logistic-based models for closed-loop control.}
\begin{itemize}
  \item \textbf{Continuous controllability}: Non-zero production at all
        expression levels prevents loss of controllability in OFF-states and
        low-expression regimes where Hill-based models fail.
  \item \textbf{Biological realism}: Non-zero basal expression faithfully models
        promoter leakiness and persistent low-expression states without
        artificial offsets.
  \item \textbf{Analytical tractability}: Closed-form feedforward laws,
        local (Gershgorin) and global (common quadratic Lyapunov)
        stability conditions in explicit form
        (Theorems~\ref{thm:sf_stability} and~\ref{thm:global_sf}),
        closed-form settling-time, tracking, and ISS certificates from
        the explicit diagonal Lyapunov function
        (Propositions~\ref{prop:lyapunov} and~\ref{prop:iss},
        Corollaries~\ref{cor:settling} and~\ref{cor:tracking}), and the parameter-uniform
        monostabilization budget---all explicit, implementable formulas
        that remain intractable for Hill functions with non-integer
        exponents.
  \item \textbf{Computational robustness}: Smooth, bounded, and Lipschitz
        responses prevent numerical stiffness in ODE solvers underlying
        closed-loop simulation.
  \item \textbf{Experimental compatibility}: Parameters $\lambda$ and $\theta$
        correspond directly to tunable molecular properties (cooperativity,
        binding affinity), enabling systematic implementation via optogenetics
        and directed evolution.
\end{itemize}

\medskip
\noindent\textbf{Future directions.}
\begin{itemize}
  \item Extension of the closed-form \emph{Lyapunov} framework
        (Section~\ref{sec:lyapunov}) to $n$-gene networks with general
        topologies: the comparison-principle argument of
        Remark~\ref{rem:global_ngene} already yields a global stability
        certificate for the $n$-gene case, but the exact diagonal Lyapunov
        weighting $P=\operatorname{diag}(B,A)$ of
        Proposition~\ref{prop:lyapunov}---and hence the sharp
        settling-time and ISS certificates---rests on a $2\times 2$
        sign-cancellation that does not extend directly to higher
        dimensions.
  \item Extension to stochastic gene networks, incorporating intrinsic and
        extrinsic noise relevant to low-copy-number regimes.
  \item Integration of spatial dynamics and cell-to-cell communication in
        multi-cellular systems.
  \item Adaptive feedback gains combining logistic-based mechanistic models
        with online identification.
  \item Experimental validation in optogenetic platforms supporting millisecond
        feedback resolution.
\end{itemize}

\section*{Declarations}

\subsection*{Availability of Data, Materials and Code}
The numerical results were produced with custom scripts in R; these are
available from the author on reasonable request.

\subsection*{Competing Interests}
The author declares no competing interests.

\subsection*{Funding}
Not applicable.

\subsection*{Ethics Approval and Consent to Participate}
Not applicable.

\subsection*{Authors' Contributions}
Single-author study: all aspects conceived, derived, computed, and written by the
author.

\subsection*{Acknowledgements}
Not applicable.

\bibliographystyle{elsarticle-num}

\end{document}